\numberwithin{equation}{section}
\theoremstyle{plain}
\newtheorem{Thm}{Theorem}[section]
\newtheorem{Lem}[Thm]{Lemma}
\newtheorem{Prop}[Thm]{Proposition}
\theoremstyle{remark}
\newtheorem{Rem}[Thm]{Remark}
\newcommand{\comment}[1]{}
\def\fddto{\xrightarrow{\textit{f.d.d.}}}
\newcommand{\ind}{{\bf 1}}
\def\inddd#1{{\ind}_{\left\{#1\right\}}}
\newcommand{\proba}{\mathbb P}
\newcommand{\esp}{{\mathbb E}}
\newcommand{\inv}{^{-1}}
\newcommand{\var}{{\rm{Var}}}
\newcommand{\eqnh}{\begin{eqnarray*}}
\newcommand{\eqne}{\end{eqnarray*}}
\newcommand{\eqnhn}{\begin{eqnarray}}
\newcommand{\eqnen}{\end{eqnarray}}
\newcommand{\equh}{\begin{equation}}
\newcommand{\eque}{\end{equation}}
\def\summ#1#2#3{\sum_{#1 = #2}^{#3}}
\def\prodd#1#2#3{\prod_{#1 = #2}^{#3}}
\def\sif#1#2{\sum_{#1=#2}^\infty}
\newcommand{\eqd}{\stackrel{d}{=}}
\def\topp#1{^{(#1)}}
\def\ccbb#1{\left\{#1\right\}}
\def\pp#1{\left(#1\right)}
\def\spp#1{(#1)}
\def\bb#1{\left[#1\right]}
\def\sbb#1{[#1]}
\def\mmid{\;\middle\vert\;}
\def\floor#1{\left\lfloor #1 \right\rfloor}
\def\vv#1{{\boldsymbol #1}}
\def\vvi{{\boldsymbol i}}
\def\vvt{{\boldsymbol t}}
\def\qmand{\quad\mbox{ and }\quad}
\def\qmwith{\quad\mbox{ with }\quad}
\def\mfa{\mbox{ for all }}
\def\mmas{\mbox{ as }}
\def\wt#1{\widetilde{#1}}
\def\wb#1{\overline{#1}}
\def\what#1{\widehat{#1}}
\def\limn{\lim_{n\to\infty}}
\def\limsupn{\limsup_{n\to\infty}}
\def\weakto{\Rightarrow}
\def\Z{{\mathbb Z}}
\def\R{{\mathbb R}}
\def\N{{\mathbb N}}
\def\calD{\mathcal D}
\def\calG{\mathcal G}
\def\calL{\mathcal L}
\def\calM{\mathcal M}
\def\calS{\mathcal S}
\def\topp#1{^{\scriptscriptstyle (#1)}}
\def\ddelta#1{\delta_{\pp{#1}}}
\def\PPP{{\rm PPP}}
\newcommand{\ConvP}{\overset{\proba}{\rightarrow}}
\newcommand{\EqD}{\overset{d}{=}}
\begin{document}

\begin{frontmatter}
%%%%%%%%%%%%%%%%%%%%%%%%%%%%%%%%%%%%%%%%%%%%%%
%%                                          %%
%% Enter the title of your article here     %%
%%                                          %%
%%%%%%%%%%%%%%%%%%%%%%%%%%%%%%%%%%%%%%%%%%%%%%
%\title{???}
\title{Tail processes for stable-regenerative multiple-stable model}

%\title{A sample article title with some additional note\thanksref{T1}}
\runtitle{Tail processes for stable-regenerative model}
%\thankstext{T1}{A sample of additional note to the title.}

\begin{aug}
%%%%%%%%%%%%%%%%%%%%%%%%%%%%%%%%%%%%%%%%%%%%%%%
%% Only one address is permitted per author. %%
%% Only division, organization and e-mail is %%
%% included in the address.                  %%
%% Additional information can be included in %%
%% the Acknowledgments section if necessary. %%
%%%%%%%%%%%%%%%%%%%%%%%%%%%%%%%%%%%%%%%%%%%%%%%
\author[A]{\fnms{Shuyang} \snm{Bai}\ead[label=e1]{bsy9142@uga.edu}},
\and
\author[B]{\fnms{Yizao} \snm{Wang}\ead[label=e2]{yizao.wang@uc.edu}}

%%%%%%%%%%%%%%%%%%%%%%%%%%%%%%%%%%%%%%%%%%%%%%
%% Addresses                                %%
%%%%%%%%%%%%%%%%%%%%%%%%%%%%%%%%%%%%%%%%%%%%%%
\address[A]{Department of Statistics, University of Georgia,
310 Herty Drive,
Athens, GA, 30602, USA. \printead{e1}}

\address[B]{Department of Mathematical Sciences, University of Cincinnati, 2815 Commons Way, Cincinnati, OH, 45221-0025, USA. \printead{e2}}
\end{aug}

\begin{abstract}
We investigate a family of discrete-time stationary processes defined  by multiple stable integrals  and renewal processes with infinite means. The model may exhibit behaviors of short-range or long-range dependence, respectively, depending on the parameters.  The main contribution is to establish a phase transition in terms of  the tail processes that characterize local clustering of extremes. Moreover, in the short-range dependence regime,  the model provides an example where the  extremal index is different from the candidate extremal index.
\end{abstract}

\begin{keyword}[class=MSC]
\kwd[Primary ]{60G70}
\kwd[; secondary ]{60G52}
\kwd{60K05}
\end{keyword}

\begin{keyword}
\kwd{extremal index}
\kwd{long-range dependence}
\kwd{multiple integral}
\kwd{phase transition}
\kwd{regular variation}
\kwd{renewal process}
\kwd{short-range dependence}
\kwd{tail process}
\end{keyword}

\end{frontmatter}

\section{Introduction and main results}\label{sec:1}
\subsection{The model and background}
The objective  of this paper is to study the local behavior of extremes of a family of stationary stochastic processes known as 
the
  stable-regenerative multiple-stable model that has attracted attention  in the studies of stochastic processes with {\em long-range dependence} \citep{samorodnitsky16stochastic,pipiras17long,beran13long}. 
In an accompanying paper \citep{bai21phase} the macroscopic/global limit of extremes are established in terms of convergence of the random sup-measures in the framework of \citet{obrien90stationary}, and a phase transition is revealed. Here, we characterize the microscopic/local limit of extremes, in terms of the   tail processes as introduced by  \citet{basrak09regularly}.  

The family of processes of our interest has a tail parameter $\alpha\in(0,2)$, a memory parameter $\beta\in(0,1)$, and a multiplicity parameter $p\in\N:=\{1,2,\ldots\}$. The representation is intrinsically related to renewal processes, 
 for which we introduce some notation. Consider a discrete-time renewal process  with the consecutive renewal times denoted by 
 $\vv\tau:=\{\tau_0,\tau_1,\dots\} \subset \N_0:=\{0,1,2,\ldots\}$.
  Here $\tau_0$ is the initial renewal time, and the inter-renewal times $(\tau_{i}-\tau_{i-1})_{i\ge 0}$ are i.i.d.\  $\N$-valued with cumulative distribution function $F$, that is, $F(x) = \proba(\tau_{i}-\tau_{i-1}\le x), i\in\N, x\ge 0$. Denote the probability mass function by $f(n) = \proba(\tau_{i}-\tau_{i-1} = n), n\in\N$. Throughout, we assume
\equh\label{eq:F}
\wb F(x) = 1-F(x) \sim \mathsf C_Fx^{-\beta} \mmas x\to\infty \qmwith \beta\in(0,1),
\eque
which  implies  an infinite mean,
and the following technical assumption
\equh\label{eq:Doney}
\sup_{n\in\N}\frac{nf(n)}{\wb F(n)}<\infty.
\eque
 By default, a renewal process starts at renewal at time 0, and hence $\tau_0 = 0$. 
 Note that our renewal processes may be {\em delayed}, that is, $\tau_0$  is not necessarily zero, and may be a random variable in $\N_0 = \N\cup\{0\}$, and we shall be specific when this is the case.
An important notion is the {\em stationary delay measure} of the renewal process, denoted by $\pi$. More precisely, $\pi$ is supported on $\N_0$ with
\[
\pi(k)\equiv \pi(\{k\}) = \wb F(k) = 1-F(k), \quad k\in\N_0.
\]
(For the sake of simplicity, we do not distinguish $\pi(k)$, the mass function at $k\in\N_0$, from $\pi(\{k\})$, the measure evaluated at the set $\{k\}$.) Note that the stationary delay  measure $\pi$ is a $\sigma$-finite and infinite measure on 
$\N_0$,
 since the renewal distribution has infinite mean. 
  More details are in Section \ref{sec:background}.
 
We then provide a series representation of the model.  
Consider
\equh\label{eq:series one-sided}
\sif i1\ddelta{y_i,d_i}\eqd \PPP\pp{(0,\infty]\times\N_0, \alpha x^{-\alpha-1}dxd\pi},
\eque
where the right-hand side is understood as the Poisson point process on $(0,\infty]\times \N_0$ with intensity measure $\alpha x^{-\alpha-1}dxd\pi$. 
In addition, let $\{\vv\tau\topp{i,d_i}\}_{i\in\N}$ denote, given the above Poisson point process, conditionally independent delayed renewal processes, each $\vv\tau\topp{i,d_i}$ with initial renewal time delayed at
$\tau_0 = d_i$ and   inter-renewal times following $F$.

In this paper,  we consider the {\em stable-regenerative multiple-stable model} given by
\equh\label{eq:series infty p}
\ccbb{X_k}_{k\in\N_0} \eqd\ccbb{\sum_{0<i_1<\cdots<i_p} [\varepsilon_\vvi][y_\vvi]\inddd{k\in \bigcap_{r=1}^p\vv\tau\topp{i_r,d_{i_r}}}}_{k\in\N_0},
\eque
where $\{\varepsilon_i\}_{i\in\N}$
  are i.i.d.\ Rademacher random variables independent of the point process in \eqref{eq:series one-sided}, $[\varepsilon_\vvi] = \varepsilon_{i_1}\times\cdots\times \varepsilon_{i_p}$, and similar notation applies for $[y_\vvi]$.  The name `stable-regenerative' comes from the fact that each renewal process $\vv\tau$, say non-delayed, has  a scaling limit as a 
  $\beta$-stable-regenerative
   random closed set  \citep{giacomin07random,bertoin99subordinators}. 
  The name `multiple-stable' comes from the fact that the series representation above corresponds to a  multiple stochastic integral with respect to a stable random measure.  
  See   \citep{bai20functional} for our model described explicitly in a multiple-stable-integral representation. 
    We shall work with the series representation only, so we omit the stochastic-integral representation here. The investigation of 
    multiple stochastic
     integrals has a long history, dated   from the celebrated work of \citet{ito51multiple} for the Gaussian case (a.k.a.~Wiener--It\^o integrals). See  \citep{krakowiak86random, szulga91multiple,kwapien92random,rosinski99product} for their extensions to non-Gaussian case, and \citep{samorodnitsky89asymptotic} for series representations. 
It is worth noting that the exclusion of the diagonals in the multiple stable integral concurs with the exclusion of the diagonals  in the multiple sum in \eqref{eq:series infty p}. 
Our model is actually a simplified version of the one investigated in \citep{bai20functional}; yet it preserves the key features and the renewal-process point of view facilitates our analysis.

When $p=1$, the  model $\{X_k\}_{k\in\N_0}$  in \eqref{eq:series infty p} is   a stationary  (non-Gaussian) stable process and was first introduced in \cite{rosinski96classes}. It exhibits non-standard asymptotic behaviors  in terms of limit theorems for sums and extremes as revealed in \citep{owada15functional,samorodnitsky19extremal}, and is hence viewed as a model with long-range dependence.       
For $p\ge 2$, 
a functional central limit theorem has been established recently in \citep{bai20functional}, under the assumption
 \equh\label{eq:beta_p}
 \beta_p:=p\beta -p+1\in(0,1),
 \eque
with a new family of  self-similar multiple-stable processes with stationary increments arising in the limit  (see also \citep{bai20limit,bai22limit} for   variations of the model \eqref{eq:series infty p} that scale  to  multiple-Gaussian   processes known as the Hermite processes). 
The functional central limit theorem  indicates that under the assumption \eqref{eq:beta_p}, the process exhibits behaviors of  long-range dependence.

Our motivation
 is to understand the limit behavior of extremes for all possible values of $\beta$ and $p$. 
Note that with $p=1$, necessarily $\beta_p = \beta\in(0,1)$,  and hence one only encounters  the regime of long-range dependence:  an extremal limit theorem in terms of random sup-measures \citep{obrien90stationary} has been established in \citep{samorodnitsky19extremal}, where the limit random sup-measures exhibit non-trivial dependence structure and in particular their marginal laws may no longer belong to the classical extreme-value distributions when $\beta> 1/2$. 
It was also shown that when $\beta_p<0$ the process  no longer exhibits  long-range dependence, with the limit random sup-measure being independently scattered \citep{bai21phase}.

Here, we examine the microscopic limit of extremes by investigating the tail processes for a full range of parameters. 
For a general stationary process with regularly varying tails $\{X_k\}_{k\in\N_0}$, its tail process characterizes the possible local clustering of extremes, and arises in a conditional limit theorem  as
 \equh\label{eq:Y}
 \calL\pp{\frac{X_0}{x},\dots,\frac{X_m}{x}\mmid |X_0|>x}\to \calL(Y_0,\dots,Y_m),  \mfa m\in\N,
 \eque
as $x\to\infty$. The left-hand side is understood as the conditional
joint
 law of $(X_i/x)_{i=0,\dots,m}$ given $|X_0|>x$, and the right-hand side the
 joint
  law of $(Y_0,\dots,Y_m)$.  If the above holds, then  $\{Y_i\}_{i\in\N_0}$ is referred to as the {\em tail process} of $\{X_i\}_{i\in\N_0}$. 
 The tail process was originally introduced by \citet{basrak09regularly}. Some closely related ideas date back to at least \citet{davis95point,davis98sample} (and see references therein for some closely related earlier developments) in the characterization of local clustering of extremes via point-process convergence.
See also \citet{kulik20heavy} for the state of the art on the topic.

There are several equivalent characterizations of the tail process. In particular, equivalently to \eqref{eq:Y}, one has \[%\equh\label{eq:Theta conv}
\calL\pp{\frac{X_0}{|X_0|},\dots,\frac{X_m}{|X_0|}\mmid |X_0|>x}\to (\Theta_0,\dots,\Theta_m),  \mfa m\in\N,
\]
 and the process $\vv\Theta=\{\Theta_i\}_{i\in\N_0}$ is referred as the {\em spectral tail process}. In such a case, %one can show that 
 \[
\ccbb{Y_i}_{i\in\N_0}\eqd \ccbb{V_\alpha\Theta_i}_{i\in\N_0},
 \]
 where $V_\alpha$ is an $\alpha$-Pareto random variable ($\proba(V_\alpha>x) = x^{-\alpha}, x\ge 1$) independent from $\{\Theta_i\}_{i\in\N_0}$. Furthermore, $\vv\Theta$, when it exists, can be uniquely extended to a $\Z$-indexed stochastic process.

\subsection{Main result: a phase transition}
We first describe the spectral tail processes that will appear in the limit. 
Let $\vv\Theta^*\equiv \{\Theta_k^*\}_{k\in\N_0}$ be a $\{0,1\}$-valued sequence defined as follows: let $\{\vv\tau\topp i\}_{i=1,\dots,p}$ denote i.i.d.~copies of a standard (non-delayed) renewal process, with  the inter-renewal distribution 
function
 $F$ as in \eqref{eq:F}, and consider
\equh\label{eq:Theta*}
\Theta_{k}^* :=\begin{cases}
1, & \mbox{ if }  k\in\vv\eta,\\
0, & \mbox{ otherwise},
\end{cases}
\quad k=0,1,\dots \qmwith\vv\eta :=\bigcap_{r=1}^p \vv\tau\topp r.
\eque
In particular, $\Theta_0^* = 1$ since $0\in\vv\tau\topp r, r=1,\dots,p$ by definition. 
Moreover, let $\varepsilon$  be a  Rademacher random variable independent from $\vv\Theta^*$. We set
\equh\label{eq:Theta}
\vv\Theta :=  \varepsilon\vv\Theta^* = \{\varepsilon\Theta_i^*\}_{i\in\N_0}.
\eque 
We emphasize that the law of the spectral tail process  is completely determined by $F$ and $p$.  In fact, the intersection $\vv\eta$ is  again a non-delayed (i.e., $0\in\vv\eta$) renewal process. Note that the larger $p$ is, the smaller/sparser the intersection set $\vv\eta$ becomes. 
In particular, 
 $\vv\eta$ is possibly terminating, namely, $\eta_1 = \infty$ with strictly positive probability; this is the case when the renewal distribution of $\vv\eta$  has a mass at infinity, and we write $\vv\eta = 
\{0,\eta_1,\dots,\eta_k\}$
 if the $(k+1)$-th renewal time 
 is the first time with value infinity.
The renewal process $\vv\eta$ is terminating if and only if $\beta_p<0$, where $\beta_p$ is as in \eqref{eq:beta_p}. A quick derivation can be found in Section \ref{sec:EI} below (see the discussions after \eqref{eq:qFp}).  
The main result of the paper is the following.
\begin{Thm}\label{thm:tail}
For all $m\in\N$,
\[
\calL\pp{{\frac{X_0}{|X_0|},\dots,\frac{X_m}{|X_0|}}\mmid |X_0|>x} \to
\calL\pp{\varepsilon\Theta_0^*,\dots,\varepsilon\Theta_m^*},
\]
as $x\to\infty$, with the right-hand side as introduced in \eqref{eq:Theta}. 
\end{Thm}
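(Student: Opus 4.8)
The plan is to carry out the classical ``single large jump'' analysis for Poisson series representations, adapted to the $p$-fold product structure of \eqref{eq:series infty p}. First I would record the basic reduction: an atom $(y_i,d_i)$ of the point process in \eqref{eq:series one-sided} has $0\in\vv\tau\topp{i,d_i}$ exactly when $d_i=0$, and since $\pi(\{0\})=\wb F(0)=1$, the $y$-coordinates of the atoms with $d_i=0$ --- indexed by $I_0:=\{i:d_i=0\}$ --- form a Poisson process on $(0,\infty)$ with intensity $\alpha t^{-\alpha-1}\,dt$, which I list in decreasing order as $y_{(1)}>y_{(2)}>\cdots$ with $y_{(j)}=\Gamma_j^{-1/\alpha}$, $\Gamma_j=E_1+\cdots+E_j$, $E_i$ i.i.d.\ $\mathrm{Exp}(1)$. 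Then $X_0=e_p\big(\{\varepsilon_iy_i:i\in I_0\}\big)$ is the $p$-th elementary symmetric function of the $\varepsilon_iy_i$, $i\in I_0$, and more generally $X_k=e_p\big(\{\varepsilon_iy_i\ind_{\{k\in\vv\tau\topp{i,d_i}\}}:i\in\N\}\big)$; the hypothesis $\alpha\in(0,2)$ (hence $\sum_{i\in I_0}y_i^2<\infty$) makes these a.s.\ finite. It then suffices to find the limit of the conditional law of $(X_0/|X_0|,\dots,X_m/|X_0|)$ given $|X_0|>x$.

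The key step is to show that, on the conditioning event, a single $p$-subset dominates. Let $\vvi^*$ be the $p$-subset of $I_0$ indexing $y_{(1)},\dots,y_{(p)}$, so its term in $X_0$ is $[\varepsilon_{\vvi^*}][y_{\vvi^*}]$ with $[y_{\vvi^*}]=y_{(1)}\cdots y_{(p)}$, and for $\epsilon>0$ set $A_\epsilon:=\{y_{(p)}>x^{\epsilon}\}$, which is measurable with respect to the magnitudes $\{y_i:i\in I_0\}$ alone. A tail computation should give that $\P(|X_0|>x)$ is regularly varying of index $-\alpha$, that $\P(A_\epsilon^c\mid |X_0|>x)\to c(\epsilon)$ with $c(\epsilon)\downarrow0$ as $\epsilon\downarrow0$, and that conditionally on $\{|X_0|>x\}\cap A_\epsilon$ all of $y_{(1)},\dots,y_{(p)}$ diverge with $y_{(1)}\cdots y_{(p)}\asymp x$ while $y_{(p+1)},y_{(p+2)},\dots$ stay $O_P(1)$. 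I would then prove that, on $A_\epsilon$,
\[
X_0=[\varepsilon_{\vvi^*}][y_{\vvi^*}]+o_P\big([y_{\vvi^*}]\big),
\]
since every other term of $e_p(\{\varepsilon_iy_i:i\in I_0\})$ omits at least one of the $p$ largest factors, hence is bounded by $y_{(1)}\cdots y_{(p-1)}=O_P(x^{1-\epsilon})$ times an $O_P(1)$ factor, and --- this is the crux --- their sum is still $o_P(x)$.

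Next I would transfer this to $X_1,\dots,X_m$: on $A_\epsilon$ the same $p$-subset $\vvi^*$ dominates each $X_k$, $1\le k\le m$, its contribution being $[\varepsilon_{\vvi^*}][y_{\vvi^*}]\ind_{\{k\in\vv\eta^*\}}$, where $\vv\eta^*:=\wt{\vv\tau}\topp1\cap\cdots\cap\wt{\vv\tau}\topp p$ and $\wt{\vv\tau}\topp1,\dots,\wt{\vv\tau}\topp p$ are the renewal processes attached to the $p$ atoms of $\vvi^*$, necessarily non-delayed since $d\equiv0$ on $I_0$, while each remaining term of $X_k$ uses at most $p-1$ big factors times an $O_P(1)$ factor from atoms of $y$-coordinate $O(1)$ (whose law the conditioning on $\{|X_0|>x\}$ does not affect), hence is again $o_P(x)$. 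So on $A_\epsilon$,
\[
\Big(\tfrac{X_0}{|X_0|},\dots,\tfrac{X_m}{|X_0|}\Big)=\sign\big([\varepsilon_{\vvi^*}]\big)\big(1,\ind_{\{1\in\vv\eta^*\}},\dots,\ind_{\{m\in\vv\eta^*\}}\big)+o_P(1).
\]
To identify the limit I would argue that on $A_\epsilon$ the event $\{|X_0|>x\}$ agrees, to leading order, with the magnitude-only event $\{[y_{\vvi^*}]\gtrsim x\}$, so under this conditioning the sign $[\varepsilon_{\vvi^*}]$ --- a product of $p$ i.i.d.\ Rademacher variables --- becomes asymptotically Rademacher and asymptotically independent of everything else; and $\wt{\vv\tau}\topp1,\dots,\wt{\vv\tau}\topp p$, being conditionally i.i.d.\ non-delayed renewal processes with inter-renewal law $F$ whose joint law the conditioning leaves untouched, are exactly i.i.d.\ copies of a standard renewal process, so $\vv\eta^*\eqd\vv\eta=\bigcap_{r=1}^p\vv\tau\topp r$ of \eqref{eq:Theta*}, independent of the limiting sign $\varepsilon$. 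Hence the displayed vector converges in law to $\varepsilon\big(1,\ind_{\{1\in\vv\eta\}},\dots,\ind_{\{m\in\vv\eta\}}\big)=(\varepsilon\Theta_0^*,\dots,\varepsilon\Theta_m^*)$, and letting $\epsilon\downarrow0$ removes the restriction to $A_\epsilon$: for bounded continuous $g$ the contribution of $A_\epsilon^c$ to $\esp[g(\,\cdot\,)\mid|X_0|>x]$ is at most $2\|g\|_\infty\,\P(A_\epsilon^c\mid|X_0|>x)\to 2\|g\|_\infty\,c(\epsilon)$.

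The hard part will be the domination estimate (the crux of Step 1 above, and its analogue for $X_1,\dots,X_m$): for $\alpha\ge1$ the elementary symmetric series is only conditionally convergent, so I would need to extract the cancellations produced by the Rademacher signs --- via moment or martingale bounds for the ``remainder'' symmetric polynomials --- and make these bounds uniform enough to be inserted inside the conditioning on $\{|X_0|>x\}\cap A_\epsilon$; this is exactly where the hypotheses $\alpha\in(0,2)$ and \eqref{eq:F}--\eqref{eq:Doney} would be used. The remaining ingredients, including the two $\epsilon$-room truncations, are routine.
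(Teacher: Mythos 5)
Your blueprint captures the same central mechanism as the paper's proof: conditionally on $|X_0|>x$, a single $p$-fold product dominates, its sign is a Rademacher variable independent of everything else, and the $p$ non-delayed renewal processes attached to the dominant atoms produce the intersection $\vv\eta$, giving $(\varepsilon\Theta_0^*,\dots,\varepsilon\Theta_m^*)$. That is exactly the picture the paper encodes via the objects $T_m^*$, $H_k$ and the chain of asymptotic equivalences \eqref{eq:1}--\eqref{eq:4}.

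The presentation differs in two respects. First, the paper does not work directly with the one-sided Poisson series and $I_0=\{i:d_i=0\}$; it first passes to the thinned, finite-dimensional representation \eqref{eq:p>=1}, in which the atoms are a rate-$w_m$ Poisson process ordered by $\Gamma_1<\Gamma_2<\cdots$, and the event ``the $p$ largest $y$-values all hit $\{0\}$'' becomes simply $H_0=\indd{\wt\ell_0((1,\dots,p))=(1,\dots,p)}$. This bookkeeping makes the identification of the dominant term and its attached renewal processes entirely explicit. Second, the paper never needs your truncation $A_\epsilon=\{y_{(p)}>x^\epsilon\}$: instead of controlling the overshoot of $y_{(p)}$, Lemma \ref{lem:1} shows directly that $\proba(|X_k|>x)\sim \mathsf q_p(x^\alpha)\asymp x^{-\alpha}\log^{p-1}x$ while $\proba(|X_k-T_k|>x)=O(x^{-\alpha}\log^{p-2}x)$, i.e.\ the remainder loses a full $\log$ factor. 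That one-log gap is exactly what lets the paper condition on $\{|X_0|>x\}$ without any $\epsilon$-room. If you insist on $A_\epsilon$ you do have to verify both $\P(A_\epsilon^c\mid|X_0|>x)\to c(\epsilon)\downarrow 0$ and that $y_{(1)}\cdots y_{(p)}/x$ stays $O_P(1)$ under the conditioning; neither is automatic and both ultimately require the same tail computation, so the detour buys nothing.

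The genuine gap is precisely the one you flag as the ``crux'': proving that the sum of the infinitely many non-dominant terms is negligible under the conditioning. This is Lemma \ref{lem:1} of the paper, and its proof is not a routine second-moment bound. Because the series is only conditionally convergent for $\alpha\ge1$, and because what is needed is a tail rate of the remainder (not merely finiteness of a moment) that can be compared with $\mathsf q_p(x^\alpha)\asymp x^{-\alpha}\log^{p-1}x$, the paper combines three ingredients: the known asymptotics $\proba([\Gamma_{1:p}]^{-1}>x)\sim x^{-1}\log^{p-1}x/(p!(p-1)!)$ from \citep{samorodnitsky89asymptotic}; the contraction principle for Rademacher multilinear forms \citep[Thm.~3.6]{delapena94contraction} to dominate the tail after conditioning on the $\Gamma$'s; and Breiman's lemma to multiply by a fixed heavy-tailed factor $[\Gamma_{i_{1:q}}]^{-1/\alpha}$. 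Your proposal names ``moment or martingale bounds'' as the tool, but a bare $L^2$ bound only yields $O(x^{-2})$, which is not comparable to the log-corrected Pareto scale when $\alpha<2$ is close to $2$; the decomposition by how many of the first $M$ indices appear in the multi-index, and the Breiman step for the heavy factor, are what actually make the estimate close. Without spelling that argument out, the proof does not go through. Everything else in your outline is sound and parallels the paper's steps \eqref{eq:2}--\eqref{eq:4}.
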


\begin{Rem}%\label{rem:phase transition}
Theorem \ref{thm:tail} complements   our  results in \citep{bai21phase} so that now we have a complete picture regarding limit extremes at both macroscopic and microscopit levels,  as summarized in Table \ref{table:1}.
In the table, the extremal index (EI) and the candidate extremal index are two well-known notions in extreme-value theory for stationary sequences. See more background and discussions in Section \ref{sec:EI} below.
%\begin{center}
\begin{table}[ht!]
\begin{tabular}{|c|c|c|c|c|c}
\hline
regime 
& tail process & limit random sup-measure & candidate EI & EI\\
& (microscopic) & (macroscopic) & $\vartheta$ & $\theta$ \\
\hline
  super-critical, $\beta_p>0$ & non-terminating & with long-range dependence & 0 & 0 \\
   critical, $\beta_p=0$ &  non-terminating & independently scattered & 0 & 0 \\
    sub-critical, $\beta_p < 0$ & terminating & independently scattered & $\mathfrak q_{F,p}$ & $\mathsf D_{\beta,p}\mathfrak q_{F,p}$\\
    \hline
\end{tabular}
\vspace{.2cm}
\caption{Summary of phase transition. See $\mathfrak q_{F,p}$ and $D_{\beta,p}$ in \eqref{eq:qFp} and \eqref{eq:shape D} below.}\label{table:1}
\end{table}
%\end{center}

 At microscopic level here, Theorem \ref{thm:tail} reveals that the super-critical ($\beta_p>0$) and critical ($\beta_p = 0$) regimes have the same type of asymptotic behavior, in the sense that the tail processes are not terminating; while at macroscopic level as described in \citep{bai21phase}, the critical ($\beta_p = 0$) and sub-critical ($\beta_p  < 0$) regimes have the same type of asymptotic behavior, with independently scattered $\alpha$-Fr\'echet random sup-measures arising in the limit. On the other hand, 
 the limit random sup-measures in the regime $\beta_p>0$ are of a new type, extending the one characterized in \citep{samorodnitsky19extremal}. They again exhibit long-range dependence and, unless $p = 1, \beta<1/2$, their marginal law goes beyond the family of classical extreme-value distributions due to an aggregation effect (see \citep{wang22choquet} for an explanation when $p=1$).

So, 
 the convergence of tail processes reveals a more delicate picture of local behaviors when $\beta_p\le 0$. 
At the same time, the tail process at the super-critical regime $\beta_p>0$ is still of a local nature, as it is impossible to recover from the tail processes the  random sup-measures that arise in the macroscopic limit obtained in \citep{bai21phase}. It is also remarkable that in the critical regime, while the macroscopic limit is independently scattered, the same as in the sub-critical regime, the normalization is {\em not the same}; this is related to the fact that the tail process in the critical regime is again non-terminating, reflecting the infinite size of the local cluster of extremes. 
\end{Rem}

\subsection{A notable example: when the candidate extremal index differs from the extremal index}\label{sec:EI}
A widely investigated notion for regularly-varying stationary stochastic process is the {\em extremal index}. A closely related notion is the {\em candidate extremal index}. For both,  our reference is again \citet{kulik20heavy}. 
We first recall the definitions when examining the stable-regenerative model in the sub-critical regime. 
Throughout this subsection, we assume  $\beta_p<0$.
In this case, the spectral tail process $\vv \Theta$ is terminating. 
Then  the {\em candidate extremal index}  $\vartheta$ is  (writing $a_+ = \max(a,0)$ for $a\in\R$)
\equh\label{eq:our var}
\vartheta  := \esp\pp{\sup_{i\ge 0}(\Theta_i)_+^\alpha - \sup_{i\ge 1}(\Theta_i)_+^\alpha   \mmid \Theta_0>0 } = \mathfrak q_{F,p},
\eque
with
\equh\label{eq:qFp}
\mathfrak q_{F,p} := \proba(\eta_1 = \infty) = 
\left(\sif n0 u(n)^p\right)^{-1} = 
\limn\wb F_p(n)\in(0,1),
\eque
where $u(n) = \proba(n\in\vv\tau)$ (see \eqref{eq:u} below).  
(See Remark \ref{rem:RT} on our convention of candidate extremal indices.)
To compute $\vartheta = \mathfrak q_{F,p}$, it is key to observe that 
$(\Theta_0)_+=\varepsilon_+$  and $(\Theta_i)_+= \varepsilon_+ \Theta_i^* $ is $\{0,1\}$-valued for all $i\ge 1$, and that $\varepsilon$ and $\vv\Theta$ are independent, whence
$\vartheta   = \proba\pp{\Theta_i^* = 0, \mfa i\in\N \ |\  \varepsilon=1}=\proba\pp{\Theta_i^* = 0, \mfa i\in\N } = \proba(\eta_1 = \infty)$. 
At the same time, the sequence $\{\Theta_i^*\}_{i\in\N_0}$ is all zeros except a finite number, 
say $\mathfrak G$, of ones, and because of a
 strong Markov property, $\mathfrak G$ is then geometric with 
\equh\label{eq:G}
\proba(\mathfrak G = k) = \proba(\eta_1 = \infty)\proba(\eta_1 < \infty)^{k-1}, k\in\N,
\eque and $\esp \mathfrak G = \mathfrak q_{F,p}\inv$. At the same time, 
$\esp \mathfrak G = \sif n0 \proba(n\in\vv\eta) = \sif n0\proba(n\in\vv\tau)^p\in (0,\infty)$, whence \eqref{eq:qFp} follows. 

Next, the 
{\em extremal index}
 $\theta$, if it exists, can be defined as the unique value from $[0,1]$ so that the following convergence holds
\equh\label{eq:EI def}
\proba\pp{\frac1{b_n}\max_{k=1,\dots,n}X_k \le x} = \exp\pp{-\theta x^{-\alpha}},
\eque
where $b_n$ is such that (see \eqref{eq:top_dominates} below)
\equh\label{eq:b_n}
\limn n\proba(X_1>b_n) = 1, \quad \mbox{ and in fact }\quad b_n \sim \pp{\frac12\frac{n\log^{p-1}n}{p!(p-1)!}}^{1/\alpha} 
\eque 
as $n\rightarrow\infty$.
For our model, it has been proved in \citep{bai21phase} that
\[%\equh\label{eq:EI}
\theta = \mathsf D_{\beta,p}\mathfrak q_{F,p},
\]%\eque
with
\equh\label{eq:shape D}
\mathsf D_{\beta,p} := \sum_{s=q_{\beta,p}}^p\binom ps(-1)^{p-s}(-\beta_p)^{p-1} \qmwith q_{\beta,p} := \min\{q\in\N:\beta_q<0\}.
\eque
This follows from the convergence of random sup-measure established in \citep{bai21phase}, formally,
\equh\label{eq:RSM limit}
\frac1{b_n}\ccbb{\max_{k/n\in G}X_k}_{G\in\calG}\fddto (\mathsf D_{\beta,p}\mathfrak q_{\beta,p})^{1/\alpha}\ccbb{\calM_\alpha^{\rm is}(G)}_{G\in\calG},
\eque
where $\calG$ is the collection of all open intervals of $[0,1]$ and $\calM_\alpha^{\rm is}$ is the {\em independently scattered} $\alpha$-Fr\'echet random sup-measure. 
Indeed, \eqref{eq:EI def}  is the special case  of   marginal convergence with  $G=[0,1]$.
\begin{Rem}\label{rem:RT}
We have used a slightly different convention  compared to  \cite{kulik20heavy}. In fact, our candidate extremal index    $\vartheta$   
 is the same as the  \emph{right-tail} candidate extremal index in \citep[ Eq.~(7.5.4b)]{kulik20heavy}. The so-called candidate extremal index  in \citep{kulik20heavy}  is instead  given by (see \citep[ Eq.~(5.6.5)]{kulik20heavy}) 
\[
\vartheta^\circ = \esp\pp{\sup_{i\ge 0}|\Theta_i|^\alpha - \sup_{i\ge 1}|\Theta_i|^\alpha}. 
\] The right-tail index (our $\vartheta$) concerns only positive extreme values, while the  index $   \vartheta^\circ $ above concerns extreme values in absolute values. For our model, one readily checks that   $\vartheta$ and $\vartheta^\circ$ happen to coincide. 
\end{Rem}

It is clear from the definitions above that the extremal index $\theta$ and the candidate extremal index $\vartheta$ are characteristics of macroscopic and microscopic, respectively, behaviors of extremes. A priori these two values are not necessarily equal,
 although they can be shown to be equal 
   under an anti-clustering condition and a mixing condition  (see Remark \ref{rem:PP} below). However, we are not aware of any other examples of a regularly varying stochastic process so that $\theta$ and $\vartheta$
 are both between 0 and 1 and yet not the same.   
Therefore, our model is of special interest, and we  elaborate more the underlying mechanism for this rare phenomenon from the following two aspects.
 \begin{enumerate}[(i)]
 \item  First, we provide a simplified computation of the extremal index for the case $p=2$ in Section \ref{sec:p=2}. The limit theorem \eqref{eq:RSM limit}  above is established in \citep{bai21phase} by a different and much more involved proof.  The proof in in Section \ref{sec:p=2}, however, does not apply for $p\ge 3$. 
We hope the presentation here sheds light on the very unusual dependence structure of the model. 
\item 
Second, we prove that the so-called anti-clustering condition holds, that is, 
\equh\label{eq:AC}
\lim_{\ell\to\infty}\limsupn\proba\pp{\max_{\ell\le |k|\le r_n}|X_k|>b_n\eta\mmid |X_0|>b_n\eta} = 0, \mfa \eta>0,
\eque
for $r_n\rightarrow\infty$, $r_n = o(\log b_n)$,  in Section \ref{sec:anti}.
 (We actually prove a stronger version of it, known as the $\calS$ condition in the literature.)
 This, and combined with the fact that $\vartheta\ne\theta$, implies immediately that the commonly applied mixing-type condition in the classical approach fails for our model 
  at least for  the block size $r_n = o(\log b_n)$. 
See Remark \ref{rem:PP} below for more discussions. \end{enumerate}
\begin{Rem}\label{rem:PP}In the classical approach, in addition to the convergence of the tail processes and the verification of the anti-clustering condition \eqref{eq:AC}, if one could  also verify for our model the condition 
\equh\label{eq:mixing}
\limn \esp\exp\pp{-\summ i1n f(X_i/b_n)} - \pp{\esp\exp\pp{-\summ i1{r_n}f(X_i/b_n)}}^{\floor{n/r_n}} = 0,
\eque
with $b_n,r_n$ as in \eqref{eq:b_n}, then it follows that
\equh\label{eq:PP}
\summ i1n \ddelta{X_i/b_n,i/n}\inddd{X_i>0}\weakto \sif i1\mathfrak G_i\ddelta{\vartheta^{1/\alpha}\Gamma_i^{-1/\alpha},U_i} 
\eque
in the space of $\mathfrak M_p((0,\infty]\times[0,1])$, where $\{\Gamma_i\}_{i\in\N}$ are consecutive arrival times of a standard Poisson process, $\{U_i\}_{i\in\N}$ are i.i.d.~uniform random variables over $(0,1)$, $\{\mathfrak G_i\}_{i\in\N}$ are i.i.d.~copies of $\mathfrak G$ in \eqref{eq:G}, and all families are independent
(see \citep[Corollary 7.3.4]{kulik20heavy}). 
The convergence \eqref{eq:PP} would imply the extremal index $\theta = \vartheta$, whence a contradiction. Thus, in the context of our model, the relation \eqref{eq:mixing} fails to hold for $r_n\rightarrow\infty$ and $r_n=o(\log b_n)$. 

The idea of the classical approach is as follows. Within each block of size $r_n$, the local asymptotics is fully characterized by the tail processes, and different blocks behave asymptotically independently due to the condition \eqref{eq:mixing}. 
Usually, the condition \eqref{eq:mixing} follows from certain strong mixing properties 
(e.g. $\beta$-mixing; see \citep[Section 7.4.1]{kulik20heavy}).  
Our results indicate that our model does not enjoy very strong mixing properties. 
Nevertheless, we expect to be able to prove \eqref{eq:PP}  with $\vartheta^{1/\alpha}$ on the right-hand side replaced by $\theta^{1/\alpha}$. 
This is left for an upcoming work.
\end{Rem}

We conclude the introduction with 
a few more
 remarks.
\begin{Rem}
Note that $\theta = \mathsf D_{\beta,p}\vartheta$, and here we collect a couple of facts on $\mathsf D_{\beta,p}$:
\begin{enumerate}[(i)]
\item for all $\beta\in(0,1)$ so that $\beta_p<0$, $\mathsf D_{\beta,p}\in(0,1)$,
\item for all $\beta\in(0,1/2)$, $\mathsf D_{\beta,p} = 1-p\beta^{p-1}$. In particular, $\lim_{\beta\downarrow 0}\mathsf D_{\beta,p} = 1$. 
\end{enumerate} 
To see the first, introduce
\[
f_p(x) = \frac1{(p-1)!}\summ s0p \binom ps (-1)^{p-s}(s-x)_+^{p-1} = \frac1{(p-1)!}\summ s0p(-1)^{p-s}\binom ps(x-s)_+^{p-1},
\]
which is the 
probability density function of the
 so-called Irwin--Hall distribution, the one for the sum of $n$ i.i.d.~uniform random variables over $(0,1)$. Then we can write
\[
\mathsf D_{\beta,p} = (p-1)!(1-\beta)^{p-1}f_p((1-\beta)\inv)>0, \mfa \beta\in(0,1-1/p),
\]
or exactly when $\beta_p<0$.
To show that $\mathsf D_{\beta,p}<1$, it is equivalent to show that $f_p(x) <((p-1)!)\inv x^{p-1}$ for all $x>1$. But, recall that $f_p(x)$ is the $p$-times convolution function of uniform density function over $(0,1)$ evaluated at $x$. while $\frac1{(p-1)!}x^{p-1}$ is the $p$-times convolution of the indicator function over $[0,\infty)$ evaluated at $x$. The desired relation now follows. 

To see the second, first recall that for any polynomial function $Q(s)$ with degree at most $p-1$, one has $\summ s0p(-1)^{p-s}\binom ps Q(s) = 0$ 
since this corresponds to a $p$-times differencing operation. Then, take $Q(s) = (-\beta_s)^{p-1} = (s(1-\beta)-1)^{p-1}$ here. For $\beta\in(0,1/2)$, $q_{\beta,p} = 2$, 
and
it then follows that
\[
\mathsf D_{\beta,p} = \summ s2p (-1)^{p-s}\binom ps(-\beta_s)^{p-1} = -(-1)^pQ(0)-(-1)^{p-1}pQ(1) = 1-p\beta^{p-1}.
\]
The  fact $\mathsf D_{\beta,p}\in (0,1)$, which implies $\theta<\vartheta$, is consistent with  \citet[Lemma 7.5.4]{kulik20heavy}. We note that although the lemma there is stated only for 
$\R_+$-valued processes,   the proof readily extends to real-valued processes with the  right-tail 
 (cf.\ Remark \ref{rem:RT}) candidate extremal index  $\vartheta$ in \eqref{eq:our var}.
\end{Rem}

\begin{Rem}
In the case $\beta_p = 0$, we have in the critical regime,  in place of \eqref{eq:RSM limit},
\[
\proba\pp{\frac1{\wt b_n}\max_{k=1,\dots,n}X_k \le x} = \exp\pp{-\wt \theta x^{-\alpha}} \qmwith \wt b_n = \pp{\frac{n(\log\log n)^{p-1}}{\log n}}^{1/\alpha},
\]
and the value $\wt\theta>0$ explicitly computed in \citep{bai21phase}.
 Since $b_n /\wt b_n\rightarrow \infty$ as $n\rightarrow\infty$ with $b_n$ as in \eqref{eq:b_n},
the convergence above implies that the extremal index $\theta$ defined   via \eqref{eq:EI def} is zero. 
One may argue that in the super-critical regime, the extremal index is again $\theta = 0$ in the same way. 
\end{Rem}

\begin{Rem}
Our example shows that the candidate extremal index should be viewed as a local statistic only. In this regard,  we recall that despite the fact that it is commonly interpreted as the reciprocal of the mean cluster size, it is not always the case. In particular, \citet{smith88counterexample} provided an example where the extremal index is less than 1, while there is no extremal clustering from  a point-process-convergence  perspective. As further elaborated in \citep[Example 14.4.5]{kulik20heavy},  in this example the candidate extremal index and extremal index are the same. 
\end{Rem}

{\em The paper is organized as follows.} Section \ref{sec:background} provides related background, notably on multiple-stable processes and renewal processes. Section \ref{sec:tail} proves Theorem \ref{thm:tail}. Section \ref{sec:anti} proves the anti-clustering condition and the convergence of cluster point process when $\beta_p<0$.  Section \ref{sec:p=2} provides a   computation of the extremal index with $p=2,\beta\in(0,1/2)$. 
%\newpage

\section{Preliminary results}\label{sec:background}
\subsection{Renewal processes with infinite mean}%\label{sec:renewal}
Throughout, our references on renewal processes are  \citet[Appendix A.5]{giacomin07random} and \citet{bingham87regular}.
Besides the notation and properties of renewal processes in Section \ref{sec:1}, we shall also use the renewal mass function of a non-delayed  renewal process $\vv\tau$ as follows
\equh\label{eq:renewal mass}
u(k) := \proba(k\in \vv\tau),\ k\in\N_0.
\eque
 It is   known that the assumption   
\equh\label{eq:u}
u(k)\sim \frac{k^{\beta-1}}{\mathsf C_F\Gamma(\beta)\Gamma(1-\beta)},
\eque
 as $k\rightarrow\infty$ implies 
the assumption \eqref{eq:F}, and that
under the assumption \eqref{eq:Doney} the two are equivalent (see \citep{doney97onesided}). 

Note that $\vv\tau$ is null-recurrent, and   the stationary delay measure associated with the inter-renewal distribution $F$ can be taken as $\pi(k) = \wb F(k), k\in\N_0$ (more generally one may set $\pi(k) = C\wb F(k)$, and we choose the constant $C=1$ for simplicity). The meaning of stationary delay measure is  explained in the Section \ref{sec:two-sided}, where for the sake of completeness, we present a proof for  a system of {\em two-sided renewal processes} to be shift-invariant.

Next, we recall some properties of the intersected renewal processes $\vv\eta = \bigcap_{r=1}^p\vv\tau\topp r = 
\{0,\eta_1,\eta_2,\dots\}
$ as introduced in \eqref{eq:Theta*}.
We let $F_p$ denote the cumulative distribution function of $\eta_1$ and we shall need the asymptotics of $\wb F_p(x) = 1-F_p(x)$. 
In summary, we have the following:
\[
\wb F_p(n) \sim \begin{cases}
\displaystyle n^{-\beta_p}\frac{(\mathsf C_F\Gamma(\beta)\Gamma(1-\beta))^p}{\Gamma(\beta_p)(\Gamma(1-\beta_p))}, & \mbox{ if } \beta_p>0; \\\\
\displaystyle
\frac{(\mathsf C_F\Gamma( \beta)\Gamma(1-\beta))^p}{\log n}, & \mbox{ if } \beta_p = 0.
\end{cases}
\]
When  $\beta_p<0$, the renewal process is terminating, with the probability $\mathfrak q_{F,p} = \proba(\eta_1 = \infty)$ as given in \eqref{eq:qFp}. %This follows from from   $\esp |\vv\eta|=\sum_{k=0}^\infty u(k)^p<\infty$ when $\beta_p<0$, and the fact that  $|\vv\eta|$ follows a geometric distribution.
For more details, see \citep{giacomin07random,bingham87regular} and  \citep{bai21phase}.
\subsection{An equivalent series representation}

Here, we explain a different representation that shall be needed for our proofs, for finite-dimensional distributions of the process.
We are interested in the joint law of $(X_0,\dots,X_m)$, for some $m$ fixed. Then, for all those $i\in\N_0$ such that 
$\vv\tau\topp{i,d_i}\cap\{0,\dots,m\} = \emptyset$, they do not contribute in the series representation. 
Therefore, by a standard thinning argument of Poisson point processes, it follows that
\equh\label{eq:thinning}
\sif i1\ddelta{\varepsilon_i\Gamma_i^{-1/\alpha},\ \vv\tau\topp{i,d_i}\cap\{0,\dots,m\}}\inddd{\vv\tau\topp{i,d_i}\cap\{0,\dots,m\}\ne\emptyset}\eqd\sif i1\ddelta{w_m^{1/\alpha}\varepsilon_i\Gamma_i^{-1/\alpha},\ R_{m,i}},
\eque
where on the right-hand side, 
\[%\equh\label{eq:w_m}
w_m = \summ k0{m} \pi(k) =  \summ k0{m} \wb F(k) \sim \frac{\mathsf C_F}{1-\beta}\cdot m^{1-\beta},
\]%\eque
the random variables $\{\Gamma_i\}_{i\in\N}$ are consecutive arrival times of a standard Poisson process, $\{\varepsilon_i\}_{i\in\N}$ are i.i.d.~Rademacher random variables, $\{R_{m,i}\}_{i\in\N}$ are i.i.d.~random closed subsets of $\{0,\dots,m\}$ with the law $R_{m,i}\eqd R_m$ described below, and all families are independent. 
Suppose $\vv\tau^*$ is a delayed renewal process with the stationary delay measure $\pi$ and renewal distribution $F$ defined on a   measurable space  with respect to  an infinite measure $\mu^*$ (since $\pi$ is an infinite measure). Then, one can introduce a probability measure $\mu_m$ on the same measurable space via
\[
\frac{d\mu_m}{d\mu^*} = \frac{\inddd{\vv\tau^*\cap\{0,\dots,m\} \ne\emptyset}}{\mu^*(\{\vv\tau^*:\vv\tau^*\cap\{0,\dots,m\}\ne\emptyset\})} = \frac{\inddd{\vv\tau^*\cap\{0,\dots,m\} \ne\emptyset}}{w_m}.
\]
Then, the law of $R_m$ is the one induced by $\vv\tau^*$ with respect to the probability measure $\mu_m$. 
Moreover, it is immediately verified that
 \begin{enumerate}[(i)]
 \item $\proba(k\in R_m) = 1/w_m, k=0,\dots,m$ (shift invariance).
  \item $\proba(\min(R_m\cap\{k+1,k+2,\dots\}) \le k+ j\mid k\in R_m) = F(j)$ (Markov/renewal property).
 \end{enumerate}
By \eqref{eq:thinning}, we work with the following equivalent representation of \eqref{eq:series infty p}: %for finite-dimensional distribution:
\equh\label{eq:p>=1}
\ccbb{X_k}_{k=0,\dots,m}\eqd \ccbb{w_m^{p/\alpha}\sum_{0<i_1<\cdots<i_p}\frac{[\varepsilon_\vvi]}{[\Gamma_\vvi]^{1/\alpha}}\inddd{k\in \bigcap_{r=1}^p R_{m,i_r}}}_{k=0,\dots,m},
\eque
where on the right-hand side the notation  is as in \eqref{eq:thinning}.

\begin{Rem}
Note that the representation in \eqref{eq:p>=1} is slightly different from the one used in \citep{bai21phase}: we include the time zero here, which is more convenient when studying the tail processes. Such a change does not effect   the normalization or the limit object. 
\end{Rem}

\subsection{A two-sided representation}\label{sec:two-sided}

We provide another series representation of $\{X_k\}_{k\in\Z}$, based on two-sided renewal processes. We do not need this in the rest of the paper. Such a representation is of its own interest, and helps illuminate the notion of delayed stationary distribution which is now defined in a two-sided manner. The results in this section may have been known in the literature but we were unable to find it.

Recall \eqref{eq:series one-sided} and \eqref{eq:series infty p}. In place of $\vv\tau\topp{i,d_i}$ now we introduce
$\vv\tau\topp{i,d_i,g_i}$: this is a two-sided renewal processes to be introduced with the first renewal time to the right of the origin (included) is $d_i$, the first renewal time to the left of the origin (not included) is $-g_i$ (so $g_i\in\N$).

We first introduce
\[% \equh\label{eq:series two-sided}
\sif i1\ddelta{y_i,d_i,g_i} \eqd \PPP((0,\infty)\times\N_0\times\N,\alpha x^{-\alpha-1}dxd\wt \pi),
\]%\eque
with the measure $\wt\pi$ on $\N_0\times\N$ determined by the following mass function
\[
\wt \pi(d,g):= \pi(d)\cdot \frac{f(d+g)}{\wb F(d)} = f(d+g), \ d\in\N_0, g\in\N.
\]
Note that the factor $f(d+g)/\wb F(d)$ is the probability mass function at $d+g, g\in\N$, of the conditional law of a renewal time with respect to $F$, given that the renewal time is strictly larger than $d$.

Now we attach independent renewal processes to each pair of $(d_i,g_i)$.  Let each ${\vv\tau}\topp{i,\rightarrow}$ be a copy of $\vv\tau = 
\{\tau_0,\tau_1,\dots\}$,
 a renewal process starting from the origin (so $\tau_0 = 0$) with renewal distribution $F$: ${\vv\tau}\topp{i,\rightarrow} \eqd 
 \{0,\tau_1,\tau_2,\dots\}$.
   Similarly, let each ${\vv\tau}\topp {i,\leftarrow}$ be a time-reversed renewal process, starting from zero: so with $\vv\tau$ as before, ${\vv\tau}\topp{i,\leftarrow}\eqd 
   \{0,-\tau_1,-\tau_2,\dots\}$. All the renewal processes are assumed independent from everything else. 
Then, we set
\equh\label{eq:tau two-sided}
\vv\tau\topp{i,d_i,g_i} := (d_i+{\vv\tau}\topp{i,\rightarrow})  \cup (-g_i+{\vv\tau}\topp{i,\leftarrow}), 
\eque
where $d_i+{\vv\tau}\topp{i,\rightarrow},-g_i+{\vv\tau}\topp{i,\leftarrow}$ are understood as   subsets  of $\Z$. 
Each two-sided renewal   $\vv\tau\topp{i,d_i,g_i}$ takes value in the path space  
\[
S = \ccbb{\vv t = \{t_i\}_{i\in\N}: t_i\in\Z, \mbox{ all distinct}}
\]
equipped with the cylindrical $\sigma$-field, i.e., the $\sigma$-field generated by  $\{\vv t\in S:\ k\in \vv t\}$, $k\in \Z$.
 
The two-sided series representation is then given by
\equh\label{eq:two-sided}
\ccbb{X_k}_{k\in\Z} \eqd \ccbb{\sum_{\vvi\in\calD_p}[\varepsilon_\vvi][y_\vvi]\inddd{k\in \bigcap_{r=1}^p\vv\tau\topp{i_r,d_{i_r},g_{i_r}}}}_{k\in\Z}  
\eque
with  $\{\varepsilon_i\}$  as in  \eqref{eq:series infty p} which is independent of everything else, and \[\calD_p=\{(i_1,\ldots,i_p)\in \N^p:\ i_1<\ldots<i_p \}.\]
\begin{Lem}%\label{lem:two-sided}
The two-sided representation \eqref{eq:two-sided} represents a stationary process $\{X_k\}_{k\in\Z}$ which restricted to $k\in\N_0$ has the representation \eqref{eq:series infty p}. 
\end{Lem}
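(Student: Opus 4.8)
The statement asserts two things: that the right-hand side of \eqref{eq:two-sided} is an almost surely well-defined process, and that it is stationary and agrees in law with \eqref{eq:series infty p} once restricted to $k\in\N_0$. The plan is to reduce everything to properties of the driving Poisson point process. Let $S$ be the path space of two-sided renewal configurations described above, and let $\rho$ be the $\sigma$-finite measure on $S$ obtained by pushing the product $\wt\pi\otimes\calL(\vv\tau\topp{\rightarrow})\otimes\calL(\vv\tau\topp{\leftarrow})$ forward under the map $\Phi:(d,g,\vv\tau\topp{\rightarrow},\vv\tau\topp{\leftarrow})\mapsto (d+\vv\tau\topp{\rightarrow})\cup(-g+\vv\tau\topp{\leftarrow})$ of \eqref{eq:tau two-sided}; thus $\sum_i\ddelta{y_i,\vv\tau\topp{i,d_i,g_i}}$ is a $\PPP$ on $(0,\infty)\times S$ with intensity $\alpha x^{-\alpha-1}dx\otimes\rho$, decorated with i.i.d.\ Rademacher marks $\varepsilon_i$. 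Since $k+1\in\vv t$ if and only if $k\in\vv t-1$, the sequence $\{X_{k+1}\}_{k\in\Z}$ is the \emph{same} measurable functional of the marked point process obtained by applying the left-shift $\sigma:\vv t\mapsto\vv t-1$ to every $S$-coordinate and keeping the marks in place; this shifted marked point process is again Poisson with intensity $\alpha x^{-\alpha-1}dx\otimes\sigma_*\rho$ and i.i.d.\ Rademacher marks, so stationarity of $\{X_k\}_{k\in\Z}$ will follow once we prove that $\rho$ is $\sigma$-invariant. For the restriction statement, note that $\vv\tau\topp{i,d_i,g_i}\cap\N_0=(d_i+\vv\tau\topp{i,\rightarrow})\cap\N_0$ because $-g_i+\vv\tau\topp{i,\leftarrow}\subset\{-1,-2,\dots\}$; hence $\{X_k\}_{k\in\N_0}$ depends on the driving data only through $\{(y_i,d_i,\vv\tau\topp{i,\rightarrow})\}_i$ and $\{\varepsilon_i\}_i$, and marginalizing out the (otherwise unused) $g_i$ turns the intensity $\alpha x^{-\alpha-1}dx\,d\wt\pi$ into $\alpha x^{-\alpha-1}dx\,d\pi$, since $\sum_{g\ge1}\wt\pi(d,g)=\sum_{g\ge1}f(d+g)=\wb F(d)=\pi(d)$, while leaving an independent non-delayed renewal $\vv\tau\topp{i,\rightarrow}$ attached to each $d_i$ --- precisely the driving data of \eqref{eq:series infty p} (the atom at $+\infty$ in the $y$-coordinate carrying no mass). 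Applying the thinning argument of Section \ref{sec:background} to each window $\{0,\dots,m\}$ then yields both the almost sure well-definedness of the series and the equality in law of $\{X_k\}_{k\in\N_0}$ with \eqref{eq:series infty p}.

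It remains to prove that $\rho$ is invariant under the left-shift $\sigma$. First, $\Phi$ is injective on its domain: from a configuration $\vv t$ one recovers $d=\min(\vv t\cap\N_0)$, $-g=\max(\vv t\cap\{-1,-2,\dots\})$, $\vv\tau\topp{\rightarrow}=(\vv t-d)\cap\N_0$ and $\vv\tau\topp{\leftarrow}=(\vv t+g)\cap\{0,-1,-2,\dots\}$, so $\rho$ is a genuine push-forward with no multiplicities and it suffices to compare $\sigma\circ\Phi$ with $\Phi$ on the two pieces $\{d\ge1\}$ and $\{d=0\}$ of the domain. On $\{d\ge1\}$ the shifted configuration $\sigma(\Phi(d,g,\cdot,\cdot))$ still has a nonnegative renewal point, namely $d-1$, its first renewal to the left of the origin is $-(g+1)$, and its forward and backward renewal blocks coincide with the original ones; since $\wt\pi(d-1,g+1)=f(d+g)=\wt\pi(d,g)$ and $(d,g)\mapsto(d-1,g+1)$ is a bijection from $\N\times\N$ onto $\N_0\times\{2,3,\dots\}$, the image of the $\{d\ge1\}$-part of $\rho$ under $\sigma$ is exactly the $\{g\ge2\}$-part of $\rho$. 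On $\{d=0\}$ one has $0\in\Phi(0,g,\cdot,\cdot)$ and, conditionally on $d=0$, the offset $g$ has law $F$ because $\wt\pi(0,g)=f(g)$ sums to $1$; writing $\vv\tau\topp{\rightarrow}=\{0,\tau_1,\tau_2,\dots\}$, the shifted configuration has first nonnegative renewal point $\tau_1-1$ and first renewal to the left of the origin $-1$, its forward renewal from $\tau_1-1$ has increments $\tau_2-\tau_1,\tau_3-\tau_2,\dots$ which are i.i.d.\ $F$, and its backward renewal from $-1$ has increments $g,\tau_1',\tau_2'-\tau_1',\dots$, also i.i.d.\ $F$ since $g\sim F$ is independent of $\vv\tau\topp{\leftarrow}$; the induced law of the new parameters is $\proba(d'=m,\,g'=1)=\proba(\tau_1=m+1)=f(m+1)=\wt\pi(m,1)$. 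Hence the image of the $\{d=0\}$-part of $\rho$ under $\sigma$ is exactly the $\{g=1\}$-part of $\rho$. Adding the two contributions gives $\sigma_*\rho=\rho$, which completes the proof of stationarity.

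The one genuinely delicate point is the $\{d=0\}$ case: one has to observe that conditioning on $0$ being a renewal point makes the origin's offset $g$ inside its straddling gap $F$-distributed, so that after the shift this $g$ reappears as a bona fide inter-renewal gap of the (now nondegenerate) backward renewal and the shifted configuration is genuinely $\rho$-distributed on $\{g=1\}$ with the correct weight. Everything else --- the reduction of stationarity to the $\sigma$-invariance of $\rho$, the injectivity of $\Phi$, the $\{d\ge1\}$ reparametrization, and the identification of the $\N_0$-restriction --- is routine bookkeeping that rests only on the identity $\wt\pi(d,g)=f(d+g)$ and on the backward renewal block living in the strictly negative integers.
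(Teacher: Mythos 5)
Your proof is correct. Both you and the paper reduce stationarity to shift-invariance of the intensity measure, and both rest on the same case split — whether or not $0$ is a renewal point (your $\{d\ge1\}$ vs.\ $\{d=0\}$ corresponds to the paper's $\{\mathsf g_1(\vv t)\ge2\}$ vs.\ $\{\mathsf g_1(\vv t)=1\}$) — together with the identity $\wt\pi(d-1,g+1)=\wt\pi(d,g)$ on the nondegenerate piece and the observation that, conditional on $\{d=0\}$, the straddling offset $g$ is $F$-distributed and reappears after the shift as the first backward inter-renewal gap. What differs is the bookkeeping: the paper decomposes by $(\mathsf d_0,\mathsf g_0)$, invokes Dynkin's theorem on cylinder events $\{\vv s\subset\vv t\}$, and verifies equality of the $Q$-masses by direct renewal computations with the mass function $u$; you instead exploit the injectivity of $\Phi$, so that the push-forward measure is faithfully parametrized by $(d,g,\vv\tau\topp{\rightarrow},\vv\tau\topp{\leftarrow})$, and you exhibit the shift as lifting to a measure-preserving bijection of that parameter space. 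Your route is cleaner in that it avoids both Dynkin's theorem and the $u$-computations, though it buys essentially the same content. A minor point worth flagging: the almost sure and unconditional convergence of the multiple series in \eqref{eq:two-sided} ultimately comes from the general theory of multiple stable series (the paper cites \citep{samorodnitsky89asymptotic}); the thinning argument of Section~\ref{sec:background} identifies the restricted finite-dimensional laws but does not by itself settle the convergence of the two-sided series, so your closing sentence of the first paragraph overstates slightly what thinning alone provides.
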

\begin{proof}
It is known that the multiple series in \eqref{eq:two-sided}
converges  almost surely and unconditionally 
(e.g.,
\citep[Theorem 1.3 and Remark 1.5]{samorodnitsky89asymptotic}).
By construction, \eqref{eq:two-sided} restricted to $k\in\N_0$ is the same as \eqref{eq:series infty p}. 
It remains to prove stationarity of the process $\{X_k\}_{k\in\Z}$ 
This shall follow from a shift-invariance property of the intensity measure of the Poisson point process 
\[%\equh\label{eq:PPP two-sided}
\sif i1\ddelta{y_i,\vv\tau\topp{i,d_i,g_i}}. 
\]%\eque
(We omit the discussions regarding $\{\varepsilon_i\}_{i\in\N}$ for the sake of simplicity.)
Introduce the following shift operation 
  $\mathsf B$ on $\vvt\in S$: $\mathsf B(\vvt) = \vvt +1$.
for $\vvt = \{t_i\}_{i\in\N}\in S$, $\mathsf B(\vvt) = \vvt+1 = \{t_i+1\}_{i\in\Z}$ (so that   $d+{\vv\tau}\topp{i,\rightarrow} = \mathsf B^d({\vv\tau}\topp{i,\rightarrow})$).
Let $P_{\vv\tau}$ denote the probability measure on $S$ induced by the distribution of  the  renewal process $\vv\tau$ as above. 
Then, the Poisson point process above is on the space $(0,\infty)\times S$, with  its intensity measure denoted by $\alpha x^{-\alpha-1}dx dQ$, and $Q$ is defined as the push-forward measure   with respect to the construction \eqref{eq:tau two-sided} from the product measure $\wt \pi\times P_{\vv\tau}\times P_{\vv\tau}$ on $(\N_0\times\N) \times S\times S$.

Now, the stationarity of the process $\{X_k\}_{k\in\Z}$ follows from the following shift-invariance property for the measure $Q$ on $S$:
\equh\label{eq:Q invariance}
  Q\circ \mathsf B^{-1}  = Q.
\eque
To prove the above, we start by writing for every $\vvt\in S$ and $k\in \Z$  that
\[
\mathsf d_k(\vvt) := \min\{t_i:t_i\ge k\} - k, \qmand \mathsf g_k(\vvt) :=  k-\max\{t_i:t_i<k\}.
\]
In words, $\mathsf d_k(\vvt)$ represents the distance between $k$ and the first element to the right of $k$ (included) of $\vvt$, and $\mathsf g_k(\vvt)$ the distance between $k$ and the first element to the left of $k$ (not included).   
Our construction tells that
\equh\label{eq:Q}
Q(\mathsf d_0(\vv t) = d,\mathsf g_0(\vv t) = g)  = \wt \pi(d,g) = f(d+g) \mfa   d\in\N_0,\ g\in\N. 
\eque
To show \eqref{eq:Q invariance}, based on a decomposition with respect to $d_0(\vv t)$ and $\mathsf g_0(\vv t)$ and Dynkin's theorem, it suffices  to show    
\begin{equation} \label{eq:Q inv intermediate}
 Q  \left( \vv s  \subset   B^{-1}\vv t  ,\ \mathsf d_0(B^{-1}\vv t) =d,\ \mathsf g_0(B^{-1}\vv t) = g \right)= Q\left(     \vv s \subset \vv t   ,\ \mathsf d_0(\vv t)=d, \ \mathsf g_0(\vv t) = g \right)
 \end{equation} 
for all $d\in\N_0$, $g\in\N$  and    $\vv s \in S$ of finite size such that  $\mathsf d_0(\vv s)=d$ and $\mathsf g_0(\vv s)=g$. Write $\vv s=\{ s_{-m},\ldots, s_{-1},s_0, \ldots, s_{n}     \}$,  such that  $s_{i}<s_{i+1}$ and $s_{-1}=-g$ and $s_0=d$, $m\in \N$, $n\in \N_0$.
   Then the left-hand side of \eqref{eq:Q inv intermediate} is equal to
\equh\label{eq:Q 2nd}
 Q  (  s_{i}+1 \in \vv t ,\ i=-m,\ldots,n ,\  \mathsf d_1(\vv  t)=d,\ \mathsf g_1( \vv  t) = g ).
\eque 
Now, if $d\ge 0$ and $g\ge 2$, then $\{\vv t\in S:\ \mathsf d_1(\vv  t)=d,\ \mathsf g_1(\vv  t)=g\}=\{\vv t\in S:\ \mathsf d_0(\vv  t)=d+1, \ \mathsf g_0(\vv  t)=g-1\}=\{\vv t\in S:\ \mathsf d_0(\vv  t)=s_{0}+1, \ \mathsf g_0(\vv  t)=s_{-1}+1\}$.   Hence  the expression in \eqref{eq:Q 2nd} in this case becomes 
\begin{equation}\label{eq:Q 3rd}
f(d+g) \prod_{-m\le i\le n-1, i\neq -1} u(s_{i+1}-s_i),
\end{equation}
where we have used   the relation \eqref{eq:Q}  as well as the renewal property on the two sides with  the renewal mass function   $u$ as in \eqref{eq:renewal mass}. Next suppose $d\ge 0$ and  $g = 1$. Set $\mathsf d'_0 (\vv t)=\min ( \vv t \cap \{d_0(\vv t)+1,d_0(\vv t)+2,\ldots\})$, namely, the second element of $\vv t$ to the right of the origin (included).  We have  
$
\{\vv t\in S:\ \mathsf d_1(\vv t) = d,\  \mathsf g_1(\vv t) = 1\}  =\{\vv t\in S:\  \mathsf d_0(\vv t) = 0,\  \mathsf d'_0 (\vv t)= d+1\}=\{\vv t\in S:\  \mathsf d_0(\vv t) = s_{-1}+1,\  \mathsf d'_0 (\vv t)= s_0+1\}$. 
Then by construction and the renewal property, the expression in \eqref{eq:Q 2nd} in this case becomes
 \[
 Q(\mathsf d_0(\vv t) = 0)\proba(\tau_1 = d+1) \prod_{-m\le i\le n-1, i\neq -1} u(s_{i+1}-s_i).
 \] 
Note that $Q(\mathsf d_0(\vv t) =0) = \pi(0) = \wb F(0) =  1$ and $\proba(\tau_1 = d+1)=f(d+1)$, and hence the formula above coincides with  \eqref{eq:Q 3rd} when $d\ge 0$ and $g=1$. Therefore, we have shown that the left-hand side of \eqref{eq:Q inv intermediate} is equal to the expression in \eqref{eq:Q 3rd} for all $d\in \N_0$ and $g\in \N$.   The proof is concluded once noticing that the right-hand side of \eqref{eq:Q inv intermediate} is readily \eqref{eq:Q 3rd} for all $d\in \N_0$ and $g\in \N$. 
\end{proof}
\section{Convergence for tail processes}\label{sec:tail}
We prove Theorem \ref{thm:tail}.  Below $C$ will denote a generic positive constant whose value may change from one expression to another.
 
In order to establish $(m+1)$-dimensional multivariate regular variation, we shall work with the representation \eqref{eq:p>=1} with $ m$ throughout fixed.
We introduce some notation. Set  
\begin{align}
\wt\ell_k(1) & :=\min\ccbb{i\in\N:k\in R_{m,i}},\nonumber\\
 \wt\ell_k(s) &:=\min\ccbb{i>\wt \ell_k(s-1):k\in R_{m,i}},\ s\ge 2, \ k=0,\dots,m,\label{eq:ell_k}
\end{align}
namely, $\wt\ell_k(1),\wt\ell_k(2),\ldots$ are the successive $i$-indices such that $k\in R_{m,i}$. For $\vv i=(i_1,\ldots,i_p) \in \calD_p$, we write $\wt\ell_k(\vv i)=(\wt\ell_k(i_1),\ldots, \wt\ell_k(i_p))$.
In this way, we write 
\equh\label{eq:top}
X_k = w_m^{p/\alpha}\sum_{\vvi\in\calD_p}\frac{\sbb{\varepsilon_{\wt\ell_k(\vvi)}}}{\sbb{\Gamma_{\wt\ell_k(\vvi)}}^{1/\alpha}} \qmand
T_k:=w_m^{p/\alpha}{\frac{\sbb{\varepsilon_{\wt\ell_k((1,\dots,p))}}}{\sbb{\Gamma_{\wt\ell_k((1,\dots,p))}}^{1/\alpha}}}, \quad  k=0,\dots,m.
\eque
Here and below the notational convention is to write the product  $\varepsilon_{\wt\ell_k(i_1)}\cdots\varepsilon_{\wt\ell_k(i_p)}$ as $[\varepsilon_{\wt\ell_k(\vvi)}]   $, and similarly for $[\Gamma_{\wt\ell_k(\vvi)}]$. Note that the indicator functions are dropped in the representation \eqref{eq:top}. 
Moreover, in order to study the marginal distribution, taking into account of  the thinning probability $\proba(k\in R_{m,i})=w_m^{-1}$, we shall work with the following representation for each $k\in \N_0$ (but not jointly in $k$):
\equh\label{eq:thinning Xk}
\pp{X_k,T_k} \eqd  \pp{\sum_{\vvi\in\calD_p}\frac{[\varepsilon_\vvi]}{[\Gamma_\vvi]^{1/\alpha}},\frac{\sbb{\varepsilon_{(1,\dots,p)}}}{\sbb{\Gamma_{(1,\dots,p)}}^{1/\alpha}}}.
\eque
Recall that from \citep{samorodnitsky89asymptotic}, we have  
\equh\label{eq:product_tail}
\mathsf q_p(x):=\proba\pp{\bb{\Gamma_{1:p}}^{-1}>x} \sim \frac{x^{-1}\log^{p-1}x}{p!(p-1)!}.
\eque
as $x\to\infty$. 
Here and below we write $\Gamma_{1:p} = \Gamma_{(1,\dots,p)} = \Gamma_1\times\cdots\times \Gamma_p$. 
Moreover we have the following.
\begin{Lem}\label{lem:1}
We have
$\proba\spp{\max_{(1,\dots,p)\ne\vvi\in\calD_p}\bb{\Gamma_{\vvi}}^{-1/\alpha}>x} = \proba\spp{(\Gamma_1\cdots \Gamma_{p-1}\Gamma_{p+1})^{-1/\alpha}>x}$
and
\equh\label{eq:single_product}
\limsup_{x\to\infty}\frac{\proba\pp{(\Gamma_1\cdots \Gamma_{p-1}\Gamma_{p+1})^{-1/\alpha}>x}}{\mathsf q_{p-1}(x^\alpha)} \le \esp \Gamma_2\inv<\infty.
\eque
Moreover,  as $x\rightarrow\infty$, 
\equh\label{eq:top_dominates}
\proba(|X_k|>x)\sim 2\proba( X_k >x) \sim  \proba(|T_k|>x) = \mathsf q_p(x^\alpha)\sim   \frac{\alpha^{p-1} x^{-\alpha} \log^{p-1}(x)}{p!(p-1)!},
\eque
and
\equh\label{eq:remainder_rate}
\proba(|X_k-T_k|>x)\le C\mathsf q_{p-1}(x^\alpha)=O(      x^{-\alpha} \log^{p-2}(x) ).
\eque

\end{Lem}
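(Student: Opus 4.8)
The plan is to reduce everything to the single asymptotic \eqref{eq:product_tail}, i.e.\ the tail of a product of $p$ i.i.d.\ Gamma-like variables $\mathsf q_p(x)\sim x^{-1}\log^{p-1}x/(p!(p-1)!)$, via elementary conditioning arguments. First I would address the identity for $\max_{(1,\dots,p)\ne\vvi\in\calD_p}[\Gamma_\vvi]^{-1/\alpha}$: since $\vvi\in\calD_p$ ranges over strictly increasing $p$-tuples and $\Gamma_1<\Gamma_2<\cdots$, the smallest possible product $[\Gamma_\vvi]$ other than $\Gamma_1\cdots\Gamma_p$ is obtained by replacing the largest index $p$ by $p+1$, giving $\Gamma_1\cdots\Gamma_{p-1}\Gamma_{p+1}$; any other admissible tuple dominates this one coordinatewise, hence has a larger product. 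So the maximum over $\vvi\ne(1,\dots,p)$ of $[\Gamma_\vvi]^{-1/\alpha}$ equals $(\Gamma_1\cdots\Gamma_{p-1}\Gamma_{p+1})^{-1/\alpha}$ almost surely, which is the claimed identity.

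Next, for \eqref{eq:single_product} I would condition on $\Gamma_{p+1}$ and write $\Gamma_{p+1}=\Gamma_{p-1}+(\Gamma_{p+1}-\Gamma_{p-1})$; more cleanly, note $\Gamma_1\cdots\Gamma_{p-1}\Gamma_{p+1}\ge \Gamma_1\cdots\Gamma_{p-1}\cdot(\Gamma_{p+1}-\Gamma_2)$ is not quite the right split, so instead I would use that $\Gamma_{p+1}/\Gamma_2$ is independent of... Actually the clean route: condition on $R:=\Gamma_{p+1}/\Gamma_2 \ge 1$, which has finite mean, and use that $(\Gamma_1\cdots\Gamma_{p-1}\Gamma_2)^{-1}$ has the tail $\mathsf q_{p-1}$ up to a bounded factor — but $\Gamma_1\cdots\Gamma_{p-1}\Gamma_2$ is not a product of $p-1$ independent Gammas. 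The correct decomposition is: $\Gamma_1\cdots\Gamma_{p-1}\Gamma_{p+1}\ge \Gamma_1\cdots\Gamma_{p-1}\cdot\Gamma_2$ trivially (as $\Gamma_{p+1}\ge\Gamma_2$), so $\proba((\Gamma_1\cdots\Gamma_{p-1}\Gamma_{p+1})^{-1/\alpha}>x)\le \proba((\Gamma_1\cdots\Gamma_{p-1}\Gamma_2)^{-1/\alpha}>x)$, and then bound by conditioning on $\Gamma_2$: writing $y=x^\alpha$, $\proba(\Gamma_1\cdots\Gamma_{p-1}<1/y \mid \Gamma_2)\le \proba(\Gamma_1\cdots\Gamma_{p-1}<\Gamma_2/(y\Gamma_2)\mid\Gamma_2)$... this is circular. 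The honest statement is simply: $\Gamma_1\cdots\Gamma_{p-1}\Gamma_{p+1}$ stochastically dominates $\Gamma_1\cdots\Gamma_{p-1}$ divided by nothing — rather, I should condition on the value $v$ of $\Gamma_{p+1}$, use $\proba(\Gamma_1\cdots\Gamma_{p-1}<1/(y v))$, which by \eqref{eq:product_tail} (for $p-1$ factors) behaves like $\mathsf q_{p-1}(yv)\le v^{-1}\mathsf q_{p-1}(y)\cdot(\text{slowly varying correction})$, and integrate against the law of $\Gamma_{p+1}$; the dominant contribution comes from $v\asymp 1$ and yields the constant $\esp[\Gamma_2^{-1}]$ after recognizing $\Gamma_{p+1}$ and $\Gamma_2$ play interchangeable roles in the leading order via exchangeability of the gaps. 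This integration step — controlling the slowly varying factor uniformly and justifying passage to the limsup — is the main technical obstacle, and I expect to use a Potter-type bound on $\mathsf q_{p-1}$ together with dominated convergence.

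Finally, for \eqref{eq:top_dominates} and \eqref{eq:remainder_rate} I would combine the pieces. Equation \eqref{eq:product_tail} with the substitution $x\mapsto x^\alpha$ gives $\proba(|T_k|>x)=\proba([\Gamma_{1:p}]^{-1}>x^\alpha)=\mathsf q_p(x^\alpha)\sim \alpha^{p-1}x^{-\alpha}\log^{p-1}x/(p!(p-1)!)$, using $\log^{p-1}(x^\alpha)=\alpha^{p-1}\log^{p-1}x$; the Rademacher symmetry of $T_k$ in \eqref{eq:thinning Xk} gives $\proba(|T_k|>x)\sim 2\proba(T_k>x)$. For the remainder $X_k-T_k=\sum_{(1,\dots,p)\ne\vvi\in\calD_p}[\varepsilon_\vvi]/[\Gamma_\vvi]^{1/\alpha}$, a union bound plus the already-established identity and \eqref{eq:single_product} reduce $\proba(|X_k-T_k|>x)$ to something of order $\mathsf q_{p-1}(x^\alpha)=O(x^{-\alpha}\log^{p-2}x)$ — more precisely I would dominate $|X_k-T_k|$ by a constant times its largest term plus a tail series whose contribution is of the same or smaller order, using that the number of "second-smallest-class" tuples is finite and each is handled by \eqref{eq:single_product}, while the deeper tail is summably smaller (this requires a short argument that $\sum$ over all $\vvi$ of the individual tails converges and is dominated by the leading term, via the same product-tail asymptotics). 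Since $\mathsf q_{p-1}(x^\alpha)=o(\mathsf q_p(x^\alpha))$, subtracting/adding $T_k$ and applying a standard regular-variation perturbation argument yields $\proba(|X_k|>x)\sim\proba(|T_k|>x)$ and $\proba(|X_k|>x)\sim 2\proba(X_k>x)$, completing the proof.
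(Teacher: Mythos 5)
Your handling of the first identity via monotonicity is exactly the paper's argument and is fine, as is the derivation of the asymptotics \eqref{eq:top_dominates} from \eqref{eq:remainder_rate} via symmetry and the fact that $\mathsf q_{p-1}(x^\alpha)=o(\mathsf q_p(x^\alpha))$. But there are two genuine gaps.

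For \eqref{eq:single_product}, your proposal explicitly goes in circles and ends with a hand-wave about ``exchangeability of the gaps.'' The decomposition you need but do not find is: set $\wt\Gamma_2:=\Gamma_{p+1}-\Gamma_{p-1}$. Then $\wt\Gamma_2\eqd\Gamma_2$ (sum of two i.i.d.\ standard exponentials) \emph{and} $\wt\Gamma_2$ is independent of $(\Gamma_1,\dots,\Gamma_{p-1})$. Since $\Gamma_{p+1}\ge\wt\Gamma_2$, one has the pointwise bound
\[
(\Gamma_1\cdots\Gamma_{p-1}\Gamma_{p+1})^{-1/\alpha}\le \bb{\Gamma_{1:p-1}}^{-1/\alpha}\,\wt\Gamma_2^{-1/\alpha},
\]
the right side being a product of \emph{independent} random variables, so Breiman's lemma applies directly and gives $\limsup\le\esp\Gamma_2^{-1}$. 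Your attempted route --- conditioning on $\Gamma_{p+1}=v$ and then using the unconditional tail $\mathsf q_{p-1}$ for $\Gamma_1\cdots\Gamma_{p-1}$ --- is incorrect: conditioning on $\Gamma_{p+1}=v$ forces $\Gamma_{p-1}<v$ and changes the joint law of $(\Gamma_1,\dots,\Gamma_{p-1})$, so the ``integration against the law of $\Gamma_{p+1}$'' step does not decouple in the way you assume. The independence is along the \emph{increments}, not the arrival times themselves.

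For \eqref{eq:remainder_rate}, the phrase ``union bound plus the already-established identity'' does not suffice. The remainder $X_k-T_k$ is an \emph{infinite} multiple series over $\vvi\ne(1,\dots,p)$, and its summands are heavily dependent (they share the $\Gamma_i$'s); a direct union bound over individual tails of $[\Gamma_\vvi]^{-1/\alpha}$ neither converges in a useful way nor controls the sum of the series. The paper's proof is organized differently: fix a large $M$ (with $M\ge 2p/\alpha+p$), and split the index set according to $q$, the number of coordinates of $\vvi$ that are $\le M$, $q=0,\dots,p-1$. For $q=0$ (all indices $>M$), one uses the orthogonality of the Rademacher products $[\varepsilon_\vvi]$ together with a second-moment bound $\esp[\Gamma_{i_{1:p}}]^{-2/\alpha}\le c(i_1\cdots i_p)^{-2/\alpha}$ (valid because $M$ is large) to show the full tail of that sub-series is $O(x^{-2})$, hence negligible. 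For each $q\ge 1$ there are only finitely many choices of the small indices $i_1<\cdots<i_q\le M$; one bounds $[\Gamma_{i_{1:q}}]^{-1/\alpha}\le[\Gamma_{(1,\dots,q)}]^{-1/\alpha}$, and then \emph{decouples} the remaining multiple sum from $[\Gamma_{(1,\dots,q)}]$ by applying the contraction principle for multilinear Rademacher forms (replacing $[\Gamma_{i_{q+1:p}}]^{-1/\alpha}$ by $\prod_r(\Gamma_{i_r}-\Gamma_q)^{-1/\alpha}$), after which the remaining sum is independent of $[\Gamma_{(1,\dots,q)}]$ and has finite second moment, so Breiman's lemma gives a tail of order $\mathsf q_q(x^\alpha)=o(x^{-\alpha}\log^{p-1}x)$. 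Without this independence-producing reduction, your ``dominate by the largest term plus a tail series'' sketch has no mechanism to control the dependence between blocks of indices, so it does not constitute a proof.
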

 In words, $T_k$ is the product of $\Gamma_i$ for the smallest $p$ $i$-indices   such that $k\in R_{m,i}$, and the tails of $ X_k $ are determined by this single term alone. Then, by Lemma \ref{lem:1},  the joint representation \eqref{eq:top} allows us  to replace the $X_k$'s by $T_k$'s and deal with the joint law of $T_0,\dots,T_m$. The above can be read from \citep{samorodnitsky89asymptotic}, which deals with a more general setup. For the sake of convenience we include a proof here.
\begin{proof}[Proof of Lemma \ref{lem:1}]
The first claim \eqref{eq:single_product}  follows from monotonicity. 
Let $\wt\Gamma_2\eqd\Gamma_2$ be a random variable independent from everything else. We then have
\[
(\Gamma_1\cdots \Gamma_{p-1}\Gamma_{p+1})^{-1/\alpha}  \eqd \bb{\Gamma_{1:p-1}}^{-1/\alpha}\cdot \pp{\Gamma_{p-1}+\wt\Gamma_2}^{-1/\alpha} \le \bb{\Gamma_{1:p-1}}^{-1/\alpha}\wt\Gamma_2^{-1/\alpha}.
\]
The right-hand side is now a product of two independent random variables. The second part now follows from Breiman's lemma (e.g.~\citep{resnick07heavy}).

Notice that in view of \eqref{eq:product_tail}, the relation \eqref{eq:remainder_rate} implies \eqref{eq:top_dominates}, and hence it is left to prove \eqref{eq:remainder_rate}. Let $M$ be a fixed integer such that $M\ge 2p/\alpha+p$.  
Note that the conclusion concerns only the marginal distribution, and hence we can work with the simplified representation of $X_k$ in \eqref{eq:thinning Xk}.
By \eqref{eq:single_product}, any finite sum of terms of the form $[\varepsilon_\vvi] [\Gamma_\vvi]^{-1/\alpha}$ with $\vvi\neq (1,\dots,p)$  will have lighter tails than $[\Gamma_{(1,\dots,p)}]^{-1/\alpha}$.  
Since the sets $\{\vvi\in\calD_q: i_q\le M\}$, $q=0,\ldots,p-1$ are finite (when $q=0$ the set is understood as an empty set),  it suffices to show the following:    for any fixed $i_1<\cdots<i_q\le M$ and $q=0,1,\ldots, p-1$,   
\begin{equation}\label{eq:remainder series}
\proba\left([\Gamma_{i_{1:q}}]^{-1/\alpha}  \left|\sum_{M<i_{q+1}<\cdots<i_p } \frac{[\varepsilon_{i_{q+1:p}}]}
{  [\Gamma_{i_{q+1:p}}]^{1/\alpha}}\right|>x \right)  =o\left( x^{-\alpha} \log ^{p-1}x \right),\quad x\rightarrow \infty,
\end{equation}
here and below we write $i_{1:q} = (i_1,\dots,i_q)$, $[\Gamma_{i_{1:q}}] = \Gamma_{i_1}\times\cdots\times \Gamma_{i_q}$, and similarly  for other terms.
If $q=0$, in view of the inequality $\esp\left( [\Gamma_{i_{ 1:p}}]^{-2/\alpha}\right)\le c (i_1\ldots i_p)^{-2/\alpha} $ which holds by the choice of $M$  (cf.\ \cite[Eq.(3.2)]{samorodnitsky89asymptotic}), as well as the orthogonality induced by $[\varepsilon_{i_{1:p}}]$, one can verify that the second moment of the multiple series in \eqref{eq:remainder series} is finite, and hence the tail decay is $O(x^{-2}) =o\left( x^{-\alpha} \log ^{p-1}x\right)$ as $x\rightarrow\infty$. Suppose $q>0$ below.
Then the probability above is bounded by
\equh\label{eq:P}
\proba\left([\Gamma_{(1,\dots,q)}]^{-1/\alpha}  \left|\sum_{M<i_{q+1}<\cdots<i_p} \frac{[\varepsilon_{i_{q+1:p}}]}{  [\Gamma_{i_{q+1:p}}]^{1/\alpha}}\right|>x \right) .
\eque
Using $[\Gamma_{i_{q+1:p}}]^{-1/\alpha}\le \prodd r{q+1}p(\Gamma_{i_r}-\Gamma_{q})^{-1/\alpha}$,
 conditioning on $\{\Gamma_i\}_{i\in\N}$ and applying the contraction principle for multilinear form of Rademacher random variables \cite[Theorem 3.6]{delapena94contraction}, the probability in \eqref{eq:P} is bounded,  up to a multiplicative constant, by
\begin{align*}
\proba\left([\Gamma_{(1,\dots,q)}]^{-1/\alpha}   \left|\sum_{M<i_{q+1}<\cdots<i_p} [\epsilon_{i_{q+1:p}}] \prodd r{q+1}p(\Gamma_{i_r}-\Gamma_{q})^{-1/\alpha}\right|>x \right).
\end{align*}
Note that $[\Gamma_{(1,\dots,q)}]^{1/\alpha}$ is independent of the absolute value part above, which can be shown to have a finite second moment similarly as before using the fact   $(\Gamma_{i}-\Gamma_{q})_{i>M}\eqd (\Gamma_{i})_{i>M-q}$ with the choice of $M$. Therefore, by Breiman's lemma again, the probability above is of the same order as $\proba([\Gamma_{(1,\dots,q)}]^{-1/\alpha}>x)\sim \mathsf q_{q}(x^\alpha)  =o(x^{-\alpha} \log ^{p-1}x)$ as $x\to\infty$ in view of \eqref{eq:product_tail}.
\end{proof}

\begin{proof}[Proof of Theorem \ref{thm:tail}]
 Write
\equh
T_m^*:=w_m^{p/\alpha}\frac{\sbb{\varepsilon_{(1,\dots,p)}}}{\sbb{\Gamma_{(1,\dots,p)}}^{1/\alpha}}\qmand
\label{eq:H}
H_k := \inddd{\wt\ell_k((1,\dots,p)) = (1,\dots,p)},\ k=0,\dots,m.
\eque
Note that $T_k H_k=T_m^* H_k$.
The proof consists of establishing the following asymptotic equivalence in law: %Bai0301
\begin{align}
\calL\pp{\frac{X_0}{|X_0|},\dots,\frac{X_m}{|X_0|}\mmid |X_0|>x}& \sim \calL\pp{\frac{X_0}{|X_0|},\dots,\frac{X_m}{|X_0|}\mmid |T_m^*|>x, H_0 = 1}\label{eq:1}\\
& \sim \calL\pp{\frac{X_0}{|T_m^*|},\dots,\frac{X_m}{|T_m^*|}\mmid |T_m^*|>x, H_0 = 1}\label{eq:2}\\
& \sim \calL\pp{\frac{T_m^*H_0}{|T_m^*|},\dots,\frac{T_m^*H_m}{|T_m^*|}\mmid |T_m^*|>x, H_0 = 1}\label{eq:3}\\
& \sim \calL\pp{\varepsilon H_0,\dots,\varepsilon H_m\mid H_0 =1}\label{eq:4}.
\end{align}
Here and below, $\calL(\cdots\mid\cdots)$ denote the corresponding conditional law, and $\calL(\cdots)\sim\calL(\cdots)$ means that as $x\to\infty$, the two sides have same limit law,  given that the limit law of one of them exists.    We will leave the verification of first step \eqref{eq:1} to the end. We  first verify steps \eqref{eq:2} and \eqref{eq:3}.  Write, for each $k$,
\[%\equh\label{eq:Xk}
X_k = T_k + X_k-T_k = T^*_mH_k + T_k(1-H_k)+ X_k-T_k.
\]%\eque
Observe that
$
|T_k|(1-H_k)\le w_m^{p/\alpha}\max_{(1,\dots,p)\ne\vvi\in\calD_p}\bb{\Gamma_\vvi}^{-1/\alpha}.
$
In addition, in view of Item (i) before \eqref{eq:p>=1} and independence, we have $\proba(H_k=1)=w_m^{-p}$.
It then follows from   Lemma  \ref{lem:1} and the independence between $T_m^*$ and $H_k$ that 
\equh\label{eq:Xk_tail}
\proba(|X_k|>x) \sim  \proba(|T_m^*H_k|>x)\sim    \frac{w_{m}^{p}\alpha^{p-1} x^{-\alpha}\log^{p-1}x}{p!(p-1)!}\cdot\frac1{w_{m}^{p}} = \frac{\alpha^{p-1} x^{-\alpha} \log^{p-1}(x)}{p!(p-1)!},
\eque
and  
\equh\label{eq:Xk remainder}
\proba(|X_k-T_m^*H_k|>x)=O(x^{-\alpha}\log^{p-2}x)     
\eque 
as $x\to\infty$.
Next, we need the following facts: for random variables $Y$ and $Z$, not necessarily independent, such that $\proba(|Z|>\epsilon x) = o(\proba(|Y|> x))$ as $x\to\infty$ for any $\epsilon>0$, we have as $x\rightarrow\infty$ the convergences in distribution: %Bai0301
\equh\label{eq:L1}
\calL\pp{\frac {|Y|}{|Y+Z|}\mmid |Y|>x} \to \calL(1) \qmand \calL\pp{\frac {Z}{|Y|} \mmid |Y|>x} \to \calL(0),
\eque
where  $\calL(1)$ and $\calL(0)$ denote the laws of   constants 1 and 0, respectively. 
Indeed, the first relation follows from the second one, whereas the second holds since %Bai0301
\[\proba(|Z/Y|>\epsilon \ | \ |Y|>x) \le \proba(|Z|>\epsilon x  )/\proba(|Y|>x)\rightarrow 0\] for any $\epsilon>0$. 
Now  \eqref{eq:2} follows by writing for each $k$, %Bai0301
\[
\frac{X_k}{|X_0|}=\frac{X_k}{|T_m^*| }  \cdot \frac{|T_m^*| }{|T_m^* +X_0-T_m^*| }
\] 
and then applying the first relation in \eqref{eq:L1} with $Y = T_m^*$, $Z = X_0-T_m^*$, as well as  the relations \eqref{eq:Xk_tail} and \eqref{eq:Xk remainder}. The step \eqref{eq:3} follows by writing for each $k$, %Bai0301
\[
\frac{X_k}{|T_m^*|} = \frac{T_m^*H_k}{|T_m^*|} + \frac{X_k-T_m^*H_k}{|T_m^*|},
\]
and applying the second relation in \eqref{eq:L1}. 

The last step \eqref{eq:4} follows from  %Bai0301 
$\sbb{\varepsilon_{(1,\dots,p)}}\EqD\varepsilon$ and independence between $(H_k)_{k=0,\ldots,m}$ and $T_m^*$.
Comparing the definitions \eqref{eq:H} (note that $\wt\ell_k((1,\dots,p)) = (1,\dots,p)$ means $k\in R_{m,i}, i=1,\ldots,p$) and \eqref{eq:Theta}, one readily checks that
\[
\calL(H_0,\dots,H_m\mid H_0 = 1) = \calL(\Theta_0^*,\dots,\Theta_m^*). 
\]

Now we return to verify the first step \eqref{eq:1}. 
Introduce
$B_0(x) := \ccbb{|X_0|>x}, B^*_m(x):=\ccbb{|T_m^*|>x}$,
and   %Bai0301
\[
E = \ccbb{\left(\frac{X_0}{|X_0|},\dots,\frac{X_m}{|X_0|}\right) \in A},
\] 
where $A$ is a Borel set in $\R^{m+1}$ whose boundary is not charged by the limit law \eqref{eq:4}.
It suffices to prove
\equh\label{eq:1'}
\lim_{x\to\infty}\proba\pp{E\mmid |X_0|>x} = \lim_{x\to\infty}\proba\pp{E\mmid |T_m^*|>x, H_0 = 1}.
\eque
It is clear that, from the established steps \eqref{eq:2} to \eqref{eq:4}, the (limit of) right-hand side of \eqref{eq:1'} exists. 
For the left-hand side, write
\begin{align*}
\proba(E\mid |X_0|>x)   
 = \frac{\proba(E\cap \{|X_0|>x,H_0=1\})+\proba(E\cap \{|X_0|>x, H_0=0\})}{\proba(|X_0|>x)}.
\end{align*}
In view of \eqref{eq:top_dominates} and \eqref{eq:remainder_rate}, we have \[\proba(E\cap\{|X_0|>x,H_0 = 0\}) \le \proba(|X_0|>x, H_0 = 0)
=o(\proba(|X_0|>x))
\] as $x\to\infty$. Thereofore, with in addition  \eqref{eq:Xk_tail}, we see that $\proba(E\mid |X_0|>x)$ has the same limit as
\equh\label{eq:fraction}   \frac{\proba(E\cap\{|X_0|>x, H_0=1\})}{\proba(|T_m^*|>x,H_0=1)}
\eque
as $x\to\infty$.  
For the numerator, we  have the following upper and lower bounds:
\begin{align*}
\proba(E\cap\{|X_0|>x,H_0=1\})& \ge \proba(E \cap\{|T_m^*|>(1+\epsilon)x,H_0=1\}) - \proba(|X_0-T_0|>\epsilon x),\\
\proba(E\cap\{|X_0|>x,H_0=1\}) & \le \proba(E\cap\{|T_m^*|>(1-\epsilon)x,H_0=1\}) + \proba(|X_0-T_0|>\epsilon x).
\end{align*}
The limit of \eqref{eq:fraction} with the numerator replaced by the  upper and lower bounds above, as $x\rightarrow\infty$, can be determined by applying  \eqref{eq:remainder_rate}, \eqref{eq:Xk_tail}  and the   relations \eqref{eq:2} to \eqref{eq:4} (with $x$ replaced by $(1\pm\epsilon)x$).  Letting $\epsilon\downarrow0$,  we conclude that \eqref{eq:fraction} has the same limit as
\[
 \frac{\proba(E\cap \{|T_m^*|>x, H_0 = 1\})}{\proba(|T_m^*|>x)\proba(H_0 = 1)}
= \proba(E\mid |T_m^*|>x, H_0 = 1).
\]
This completes the proof of \eqref{eq:1'}.
\end{proof}

%\newpage
\section{Extremal index in the sub-critical regime, $p=2$}\label{sec:p=2}
In this section, we restrict to the sub-critical regime
$p =2, \ \beta\in(0,1/2)$, 
and compute the extremal index directly. 
Recall the series representation of $\{X_k\}_{k=0,\dots,n}$ in \eqref{eq:p>=1}, and for convenience we repeat here 
 \equh\label{eq:representation}
\{X_k\}_{k=0,\ldots,n}\eqd\ccbb{ X_{n,k}}_{k=0,\dots,n} \equiv\ccbb{  w_n^{2/\alpha}\sum_{1\le i_1<i_2}   \frac{\varepsilon_{i_1}\varepsilon_{i_2}}{\Gamma_{i_1}^{1/\alpha}\Gamma_{i_2}^{1/\alpha}}\inddd{k\in   R_{n,i_1}\cap R_{n,i_2}}}_{k=0,\dots,n},
 \eque
 and we follow the same 
 notation  in earlier sections except that we now have $n$ instead of $m$ and will let $n\to\infty$. 
 We also emphasize on the dependence on $n$ of the series representation when writing $X_{n,k}$ instead of $X_k$.
 Introduce
\[%\equh\label{eq:c_n}
b_n = \pp{\frac14n\log  n}^{1/\alpha}.
\]%\eque
It can be verified based on \eqref{eq:top_dominates} that  $P(X_1>b_n)\sim 1/n$ as $n\rightarrow\infty$. Then by a classical extreme   limit theorem (e.g., \cite[Proposition 1.11]{resnick87extreme}, for i.i.d.~copies $\{X\topp 0_k\}_{k\in\N}$ of $X_1$,   we have
\equh\label{eq:iid}
\frac1{b_n}\max_{k=1,\dots,n}X_k\topp 0 \weakto Z_\alpha,
\eque
where $Z_\alpha$ follows a standard $\alpha$-Fr\'echet distribution: $\proba (Z_\alpha\le x) = e^{-x^{-\alpha}}, x\ge 0$.

The goal is to establish a  extreme  limit  theorem  for the model \eqref{eq:representation} in the sub-critical case. 
\begin{Thm}\label{thm:EVT}
Assume $\beta\in(0,1/2)$. 
As $n\rightarrow\infty$, we have
\[%\equh\label{eq:partial-maxima}
\frac{1}{b_n} \max_{k=1,\ldots,n}  X_k \Rightarrow   \theta^{1/\alpha} Z_\alpha \qmwith \theta = (1-2\beta) \mathfrak q_{F,2},
\]%\eque
where $Z_\alpha$ follows a standard $\alpha$-Fr\'echet distribution, and $\mathfrak q_{F,2}$ is as in \eqref{eq:qFp}.
\end{Thm}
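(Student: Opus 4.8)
The plan is to prove Theorem~\ref{thm:EVT} by the standard ``one large jump'' heuristic made rigorous: the maximum of $X_{n,k}$ over a block of length $n$ is, asymptotically, governed by the single largest Poisson atom $\varepsilon_1\varepsilon_2\Gamma_1^{-1/\alpha}\Gamma_2^{-1/\alpha}$ (suitably normalized), and the contribution of this one atom to the running maximum over $k=1,\dots,n$ accounts for a ``cluster'' of size equal to the number of $k\in\{1,\dots,n\}$ lying in $R_{n,1}\cap R_{n,2}$. More precisely, I would first reduce, using Lemma~\ref{lem:1} and a truncation argument, to showing
\[
\frac1{b_n}\max_{k=1,\dots,n}T_{n,k}\weakto \theta^{1/\alpha}Z_\alpha,
\]
where $T_{n,k}=w_n^{2/\alpha}\varepsilon_{\wt\ell_k(1,2)}^{}\Gamma_{\wt\ell_k(1,2)}^{-1/\alpha}$ is the leading term from \eqref{eq:top}; the error $\max_k|X_{n,k}-T_{n,k}|$ is negligible at scale $b_n$ because, by \eqref{eq:remainder_rate}, each $\proba(|X_{n,k}-T_{n,k}|>\epsilon b_n)=O(b_n^{-\alpha}\log^{p-2}b_n)=O(n^{-1})$ when $p=2$, and a union bound over the $n$ values of $k$ together with a more careful accounting (the remainders share the same underlying Poisson atoms, so one really controls $\max_{i}\Gamma_{i_1}^{-1/\alpha}\Gamma_{i_3}^{-1/\alpha}$-type quantities) shows the whole discrepancy is $o_P(b_n)$.

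The second and central step is to compute the limit of $\frac1{b_n}\max_k T_{n,k}$. Writing $P_i=\Gamma_i$ for the Poisson arrivals and using \eqref{eq:thinning}, for a fixed threshold $x>0$,
\[
\proba\Bigl(\tfrac1{b_n}\max_{k=1,\dots,n}T_{n,k}\le x\Bigr)
=\esp\prod_{i}\ind\Bigl\{\text{atom }i\text{ does not push the max over }xb_n\text{ on }\{1,\dots,n\}\Bigr\},
\]
and each atom $i$ carries an independent copy $R_{n,i}$ of $R_n$; the $i$-th atom matters only if $w_n^{2/\alpha}\Gamma^{-1/\alpha}>xb_n$ for its $\Gamma$, i.e.\ with probability $\sim$ (by \eqref{eq:product_tail}) of order $(xb_n/w_n^{2/\alpha})^{-\alpha}\log(\cdot)$, and then it contributes a cluster of size $\#(R_{n,i_1}\cap R_{n,i_2}\cap\{1,\dots,n\})$, where $i_1<i_2$ are the two smallest indices realizing $k$. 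The key computation is therefore: conditionally on two independent copies $R_n',R_n''$ of $R_n$, the expected number of $k\in\{1,\dots,n\}$ with $k\in R_n'\cap R_n''$, divided by the ``number of $k$ such that $k\in$ (a single) $R_{n,i}$'', converges to $\mathfrak q_{F,2}$ — this is exactly where $\mathfrak q_{F,2}=\proba(\eta_1=\infty)=(\sum_n u(n)^2)^{-1}$ enters — while an additional factor $1-2\beta=\mathsf D_{\beta,2}$ arises from the fact that the shape of $R_n$ restricted to a window of size proportional to $n$ is governed by the Dynkin--Lamperti/Mittag-Leffler scaling of the renewal process with $\wb F(x)\sim\mathsf C_Fx^{-\beta}$ (the $R_n$'s ``start'' at a stationary-delay point rather than at $0$, which thins the overlap near the endpoints of $\{1,\dots,n\}$ by the constant $1-2\beta$). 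Carrying this out, one obtains that each qualifying atom independently contributes, in the Poisson limit, rate $\theta\, x^{-\alpha}$ with $\theta=(1-2\beta)\mathfrak q_{F,2}$, so the product over atoms converges to $\exp(-\theta x^{-\alpha})$, which is the law of $\theta^{1/\alpha}Z_\alpha$.

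To organize the Poisson bookkeeping cleanly I would pass to the point-process formulation: show
\[
\sum_i \ddelta{b_n^{-1}w_n^{2/\alpha}\Gamma_i^{-1/\alpha},\ n^{-1}(R_{n,i}\cap\{1,\dots,n\})}\ \weakto\ \text{a Poisson point process on }(0,\infty]\times(\text{closed sets}),
\]
then read off $\max_k T_{n,k}$ as a continuous functional and invoke the continuous mapping theorem, as is routine in this literature (cf.\ \citep{samorodnitsky19extremal,bai21phase}); alternatively, stay elementary and estimate $\proba(\max_k T_{n,k}\le xb_n)$ directly via the product formula above, bounding the joint effect of atoms and showing asymptotic independence across atoms (they share no $\Gamma$'s once the leading-index structure is fixed). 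The main obstacle I anticipate is precisely the constant $1-2\beta$: establishing that $n^{-1}\esp\#\{k\in\{1,\dots,n\}: k\in R_n'\cap R_n''\}\to (1-2\beta)/w$ for the appropriate normalization $w$, which requires the local limit theorem $u(k)\sim k^{\beta-1}/(\mathsf C_F\Gamma(\beta)\Gamma(1-\beta))$ from \eqref{eq:u}, the stationarity $\proba(k\in R_n)=1/w_n$, and a careful Riemann-sum evaluation of $\frac1n\sum_{k=1}^n \proba(k\in R_n'\cap R_n'')$ using the renewal structure of $R_n$ (item (ii) before \eqref{eq:p>=1}) to relate overlaps to $\sum u(\cdot)^2$ against the Dynkin--Lamperti density of the delay; this boundary/edge effect, absent when one works on all of $\Z$, is the subtle point distinguishing $\theta$ from $\vartheta=\mathfrak q_{F,2}$.
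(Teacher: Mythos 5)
Your high-level picture---a single dominating Poisson pair plus a cluster bookkeeping---is in the right spirit, but your proposed mechanism for the factor $(1-2\beta)$ is wrong, and that factor is the whole point of the theorem. You attribute it to a Dynkin--Lamperti / boundary effect, claiming the Riemann sum $\tfrac1n\sum_{k=1}^n\proba(k\in R_n'\cap R_n'')$ picks up a nontrivial profile near the endpoints because $R_n$ starts at a stationary delay. But by shift invariance (item (i) before \eqref{eq:p>=1}), $\proba(k\in R_n'\cap R_n'')=1/w_n^2$ for \emph{every} $k\in\{0,\dots,n\}$: there is no edge effect, the Riemann sum is constant in $k$, and no amount of care there can produce $(1-2\beta)$. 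Your formula ``$\esp\#\{k\le n:k\in R_n'\cap R_n''\}$ divided by $\esp\#\{k\le n:k\in R_n\}$ tends to $\mathfrak q_{F,2}$'' is also off: that ratio equals $(n/w_n^2)/(n/w_n)=1/w_n\to0$. Where $\mathfrak q_{F,2}$ actually enters is via $\rho_n=\proba(R_{n,1}\cap R_{n,2}\ne\emptyset)\sim\mathfrak q_{F,2}\,n/w_n^2$, obtained from a last-renewal decomposition and the geometric cluster-size law---not from a spatial density. The true source of $(1-2\beta)$ in the paper is a purely logarithmic accounting: the expected number of non-empty clusters is $\lambda_n\sim(\mathfrak q_{F,2}/2)\,n m_n^2/w_n^2\asymp n^{1-2\beta}$ up to logs, which is sublinear; since each cluster carries a $V_\ell$ with tail $\proba(V_1>x)\sim(\alpha/2)x^{-\alpha}\log x$, the correct normalization for $\max_{\ell\le\lambda_n}V_\ell$ is $d_n\sim(\tfrac12\lambda_n\log\lambda_n)^{1/\alpha}$, and $\log\lambda_n\sim\log(w_n^2/n)\sim(1-2\beta)\log n$ supplies exactly the factor $(1-2\beta)$ relative to $b_n\sim(\tfrac14 n\log n)^{1/\alpha}$. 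Intuitively, $(1-2\beta)$ is the deficit between $\log(\text{number of clusters})$ and $\log n$, not a boundary correction.

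Beyond this, your organization would be hard to make rigorous. The paper does not reduce to $\max_k T_{n,k}$---that object is still a maximum over infinitely many Poisson atoms with a $k$-dependent leading pair $\wt\ell_k(1,2)$, which is awkward. Instead it truncates the sum to $i_1<i_2\le m_n$ with $w_n^2/(n\log^{2/(2-\alpha)}n)\ll m_n\ll w_n^2/n$ and kills the error in $\ell^2$ via orthogonality (Lemma~\ref{Lem:reduction}), and then, with only finitely many atoms, handles the combinatorics explicitly: $N_n$ counts non-empty clusters, $M_n$ counts overlapping pairs of clusters, and the set $\mathcal K_n$ of ``isolated'' indices $k$ is where the max is exactly the max of $\wt N_n$ i.i.d.\ copies of $V_{1,2}$. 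Your product-over-atoms heuristic implicitly treats the pairs $(i_1,i_2)$ as independent, which they are not since distinct pairs share $R_{n,i}$'s; the $\mathcal K_n$/$N_n$/$M_n$ machinery in Proposition~\ref{prop:EVT main} and Lemma~\ref{Lem:N M estimate} exists precisely to justify that independence approximation, and it is not optional.
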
 
Then, in comparison with \eqref{eq:iid} it follows that $\theta$ is the extremal index of the original process.

 The proof below is different from the one presented in \citep{bai21phase} and a non-trivial adaption is needed to extend the proof here to $p\ge 3$. 
We shall proceed with following approximation procedure.  Fix throughout  a sequence of increasing integers $\{m_n\}_{n\in\N}$ such that 
\begin{equation}\label{eq:m_n rate}
\frac{w_n^2}{n \log^{2/(2-\alpha)}n}  \ll m_n  \ll \frac{w_n^2}{n} 
\end{equation}
as $n\rightarrow\infty$, where $a_n\ll b_n$ means $a_n/b_n\rightarrow 0$ as $n\rightarrow\infty$.  
Introduce
\begin{align*}
\wt{X}_{n,k}:=
 w_n^{2/\alpha}\sum_{1\le i_1<i_2\le m_n}   \frac{\varepsilon_{i_1}\varepsilon_{i_2}}{\Gamma_{i_1}^{1/\alpha}\Gamma_{i_2}^{1/\alpha}}\inddd{k\in   R_{n,i_1}\cap R_{n,i_2}}.
\end{align*}
Then Theorem \ref{thm:EVT} follows from the following two results.
\begin{Prop}\label{prop:EVT main}
As $n\rightarrow\infty$, we have
\[\frac{1}{b_n} \max_{k=1,\ldots,n}  \wt{X}_{n,k} \Rightarrow   \theta^{1/\alpha} Z_\alpha.
\]
\end{Prop}

\begin{Lem}\label{Lem:reduction}
As $n\rightarrow\infty$, we have
\[
\frac{1}{b_n} \max_{k=1,\ldots,n}  |X_{n,k}-\wt{X}_{n,k}|\ConvP 0,
\]
where $\ConvP$ stands for convergence in probability.
\end{Lem}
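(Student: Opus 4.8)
The plan is to split $X_{n,k}-\wt X_{n,k}$ — which collects exactly the pairs $(i_1,i_2)$, $i_1<i_2$, with $i_2>m_n$ — as $X_{n,k}-\wt X_{n,k}=D_{n,k}+(C_{n,k}-T'_k)+T'_k$ and to bound $\max_{1\le k\le n}$ of each piece after division by $b_n$. Here $D_{n,k}$ collects the pairs with $m_n<i_1<i_2$; $C_{n,k}$ collects the ``cross'' pairs with $i_1\le m_n<i_2$, so that $C_{n,k}=w_n^{2/\alpha}A_k^{>m_n}A_k^{\le m_n}$ with $A_k^{\le m_n}=\sum_{i\le m_n,\,k\in R_{n,i}}\varepsilon_i\Gamma_i^{-1/\alpha}$ and $A_k^{>m_n}$ defined analogously; and $T'_k$ is the single term of largest modulus occurring in $C_{n,k}$ (with $T'_k=0$ if $C_{n,k}$ is empty), which is the one indexed by the pair $(\wt\ell_k(1),i^*_k)$, where $i^*_k:=\min\{i>m_n:k\in R_{n,i}\}$.

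For $D_{n,k}$ I would combine a second moment estimate with the union bound over $k$: using the orthogonality of the Rademacher products, $\esp\Gamma_i^{-2/\alpha}\asymp i^{-2/\alpha}$ for $i>m_n$, and $\proba(k\in R_{n,i_1}\cap R_{n,i_2})=w_n^{-2}$ for $i_1\neq i_2$, one obtains $\esp|D_{n,k}|^2\le C\,w_n^{4/\alpha-2}m_n^{2-4/\alpha}$, hence $\esp\max_{1\le k\le n}|D_{n,k}|^2\le C\,n\,w_n^{4/\alpha-2}m_n^{2-4/\alpha}$; dividing by $b_n^2\asymp(n\log n)^{2/\alpha}$ and using $m_n\gg w_n^2/(n\log^{2/(2-\alpha)}n)$ and $w_n\asymp n^{1-\beta}$, the exponent of $n$ in the resulting bound is $(1-2\beta)(\alpha-2)/\alpha<0$, so it vanishes. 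For $C_{n,k}-T'_k$ I would use $\proba(\max_k|C_{n,k}-T'_k|>\epsilon b_n)\le n\,\proba(|C_{n,0}-T'_0|>\epsilon b_n)$ and a Breiman-type estimate along the lines of Lemma~\ref{lem:1}: after deleting the leading term from each of $A_0^{\le m_n}$ and $A_0^{>m_n}$, the logarithmic correction produced by the product of two $\alpha$-regularly varying factors disappears, giving $\proba(|C_{n,0}-T'_0|>x)=O(x^{-\alpha})$ and a bound of order $1/\log n$.

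The main obstacle is the term $T'_k$, for which the naive union bound fails: one checks that $\proba(|X_{n,0}-\wt X_{n,0}|>\epsilon b_n)\asymp 1/n$ — the product of the two $\alpha$-regularly varying factors $A_0^{\le m_n},A_0^{>m_n}$ carries an extra logarithm that survives scaling by $b_n$ — so $n\,\proba(|X_{n,0}-\wt X_{n,0}|>\epsilon b_n)\not\to0$, and indeed $\esp\#\{k:|T'_k|>\epsilon b_n\}$ tends to a strictly positive constant. The quantity is nonetheless $o_P(b_n)$: heuristically $\max_k|T'_k|$ is $O_P(n^{1/\alpha})$, i.e.\ $b_n$ divided by a power of $\log n$. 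To make this rigorous I would condition on the Poisson arrival times $\mathcal G:=\sigma(\Gamma_i,\,i\ge1)$, exploiting that $\wt\ell_k(1)$ and $i^*_k$ are functions of the independent renewal samples $\{R_{n,i}\}$. On the event $\{\Gamma_i\ge\tfrac12 m_n\text{ for all }i>m_n\}=\{\Gamma_{\lceil m_n\rceil}\ge\tfrac12 m_n\}$, which has probability tending to $1$, the inequality $|T'_k|>\epsilon b_n$ forces $\wt\ell_k(1)\le m_n$, $\Gamma_{\wt\ell_k(1)}\Gamma_{i^*_k}<K_n:=4\epsilon^{-\alpha}w_n^2/(n\log n)$, and, since $\Gamma_{i^*_k}\ge\tfrac12 m_n$, also $\Gamma_{\wt\ell_k(1)}\le 2K_n/m_n\asymp\log^{\alpha/(2-\alpha)}n=:\Lambda_n$; combining this with $\proba(\wt\ell_k(1)=j,\,i^*_k=i)\le w_n^{-2}$ one gets
\[
\proba\bigl(\,\exists\,k\le n:\ |T'_k|>\epsilon b_n \,\bigm|\, \mathcal G\,\bigr)\ \le\ \min\Bigl(1,\ \tfrac{n}{w_n^{2}}\sum_{j\ge1}\indd{\Gamma_j\le\Lambda_n}\,\#\{i:\Gamma_i<K_n/\Gamma_j\}\Bigr).
\]
It remains to take expectations, separating the $j=1$ term from the $j\ge2$ part. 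Writing $\mu_n:=nK_n/w_n^{2}\asymp1/\log n$, the $j=1$ summand is, in $\mathcal G$-expectation, $\asymp\esp\min(1,\mu_n/\Gamma_1)\asymp\mu_n\log(1/\mu_n)\asymp(\log\log n)/\log n\to0$; for $j\ge2$, the truncation $\{\Gamma_j\le\Lambda_n\}$ restricts the effective range of $j$ to $O(\Lambda_n)$ and $\esp[\Gamma_j^{-1}\indd{\Gamma_j\le\Lambda_n}]\le(j-1)^{-1}$, so the $j\ge2$ part is $\lesssim\mu_n\log\Lambda_n+n w_n^{-2}\Lambda_n^{2}\asymp(\log\log n)/\log n+n^{2\beta-1}(\log n)^{O(1)}\to0$ (here $\beta<1/2$ enters through $n w_n^{-2}\to0$). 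Combining the three estimates with $\proba(\Gamma_{\lceil m_n\rceil}<\tfrac12 m_n)\to0$ gives the lemma; the only other slightly delicate step, the Breiman-type bound for $C_{n,0}-T'_0$, is a routine variant of the proof of Lemma~\ref{lem:1}.
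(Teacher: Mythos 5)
Your proof takes a genuinely different — and considerably more intricate — route than the paper's. The paper splits the difference $X_{n,k}-\wt X_{n,k}$ into three pieces according to the size of the small index $i_1$: (a) $m\le i_1\le m_n<i_2$, (b) $i_1<m$ (a fixed finite range, $m>4/\alpha$) with $i_2>m_n$, and (c) $m_n<i_1<i_2$. Pieces (a) and (c) are handled by bounding $\max_k$ by $\sum_k$ and a plain second-moment estimate (orthogonality of the Rademachers, $\esp[\Gamma_{i_1}^{-2/\alpha}\Gamma_{i_2}^{-2/\alpha}]\le C(i_1 i_2)^{-2/\alpha}$, $\proba(k\in R_{n,i_1,i_2})=w_n^{-2}$), exactly as you do for your $D_{n,k}$. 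The point of splitting out (b) is precisely the issue you identified — $\esp\Gamma_{i_1}^{-2/\alpha}=\infty$ for small $i_1$ — but the paper's remedy is much cheaper: since $i_1$ ranges over a fixed finite set, one can pull the single a.s.-finite random factor $\Gamma_{i_1}^{-1/\alpha}$ outside the $\max_k$, and what remains again has finite second moment and is treated like (a). No union bound over $k$ at the level of tail probabilities is ever invoked, so the extra logarithm in the product tail is simply never encountered.

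Your diagnosis that the naive union bound over $k$ fails for the leading cross term $T'_k$ — because the product $\Gamma_{\wt\ell_k(1)}^{-1/\alpha}\Gamma_{i^*_k}^{-1/\alpha}$ carries an extra log that survives the $b_n$ scaling — is a real observation and correctly reflects the same phenomenon that makes $\theta<\vartheta$ in this model. But it forces you into the conditioning-on-$\mathcal G$ argument, which the paper sidesteps entirely. Your route is plausible, but two estimates are left too vague to count as a proof as written: the Breiman-type bound $\proba(|C_{n,0}-T'_0|>x)=O(x^{-\alpha})$ involves removing only the single cross pair $(\wt\ell_0(1),i^*_0)$, not the leading terms of each factor $A_0^{\le m_n}$ and $A_0^{>m_n}$, so the remaining sum still contains a full row indexed by $\wt\ell_0(1)$ and a full column indexed by $i^*_0$, and one must argue carefully that no $x^{-\alpha}\log x$ tail survives; and in the $j\ge2$ part of the conditional expectation the term $nw_n^{-2}\Lambda_n^2$ appears without derivation, and it is not clear how it arises from $\esp[\Gamma_j^{-1}\indd{\Gamma_j\le\Lambda_n}]\le(j-1)^{-1}$. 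Net assessment: a different and interesting decomposition, more revealing of the underlying mechanism, but substantially longer and with gaps that the paper's pull-out-the-finite-factor trick renders unnecessary.
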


\subsection{Proof of Proposition \ref{prop:EVT main}}
 Let $\{U_{i}\}_{i\in\N}$ be i.i.d.~uniform random variables independent of everything else. Introduce
\[
V_{i_1,i_2} := \varepsilon_{i_1}\varepsilon_{i_2}  \pp{  U_{i_1} U_{i_2}}^{-1/\alpha},\quad R_{n,i_1,i_2} :=  R_{n,i_1}\cap R_{n,i_2}, \quad i_1,i_2\in \N, \ i_1<i_2.
\]
Then by a well-known relation    $(\Gamma_1,\ldots,\Gamma_{m_n}) \eqd (U_{1,m_n},\ldots, U_{m_n,m_n} )\Gamma_{m_n+1}$, where $U_{1,m_n}<\ldots< U_{m_n,m_n}$ are order statistics of  $\{U_1,\ldots,U_{m_n}\}$, we have
\begin{align}
\ccbb{\wt{X}_{n,k}}_{k=1,\ldots,n}&\eqd \ccbb{
\pp{\frac{w_n}{\Gamma_{m_n+1}}}^{2/\alpha}
 \sum_{1\le i_1<i_2\le m_n}  V_{i_1,i_2}   \inddd{k\in R_{n,i_1,i_2}}}_{k=1,\ldots,n}\notag
\end{align}
Noting that  $\Gamma_{m_n+1}\sim m_n$ as $n\rightarrow\infty$ almost surely, we can instead work with
\begin{align}
 \ccbb{X_{n,k}^*}_{k=1,\ldots,n}= \ccbb{
\pp{\frac{w_n}{m_n}}^{2/\alpha}
 \sum_{1\le i_1<i_2\le m_n}  V_{i_1,i_2}  \inddd{k\in R_{n,i_1,i_2}}}_{k=1,\ldots,n}.  \label{eq:X star}
\end{align}
 
\begin{Lem}
We   have as $n\rightarrow\infty$,
\[%  \equh\label{eq:rho_n}
 \rho_n:=\proba\pp{R_{n,1,2} \ne\emptyset} \sim \mathfrak q_{F,2}\frac{n}{w_n^2}.
\]% \eque
\end{Lem}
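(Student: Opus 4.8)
The statement to prove is
\[
\rho_n := \proba\pp{R_{n,1,2}\ne\emptyset} = \proba\pp{R_{n,1}\cap R_{n,2}\ne\emptyset} \sim \mathfrak q_{F,2}\,\frac{n}{w_n^2}
\]
as $n\to\infty$, where $R_{n,1},R_{n,2}$ are i.i.d.\ copies of the random closed set $R_n\subset\{0,\dots,n\}$ introduced before \eqref{eq:p>=1}.

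\textbf{Plan.} The guiding idea is that $\proba(k\in R_{n,i}) = 1/w_n$ for every $k\in\{0,\dots,n\}$ and the $R_{n,i}$ satisfy the renewal/Markov property (Items (i) and (ii) before \eqref{eq:p>=1}); hence $R_{n,1}\cap R_{n,2}$, when nonempty, behaves like an intersection of two independent stationary renewal sets restricted to a window of length $n$. First I would write $\rho_n = \proba(\exists\, k\le n: k\in R_{n,1}\cap R_{n,2})$ and condition on the smallest such $k$, call it $K$. By inclusion–exclusion / a union bound,
\[
\proba\pp{R_{n,1,2}\ne\emptyset} = \summ k0n \proba\pp{k\in R_{n,1,2},\ j\notin R_{n,1,2}\ \forall j<k}.
\]
The key is to identify the ``first-intersection'' probability. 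Because the two sets are independent and each looks locally like a stationary renewal process with inter-renewal law $F$, once we are at a common point the chance of a \emph{next} common point (within an infinite horizon) is exactly $1-\mathfrak q_{F,2} = \proba(\eta_1<\infty)$ by \eqref{eq:qFp}. Thus heuristically each of the $\asymp n$ candidate starting positions $k$ contributes mass $\proba(k\in R_{n,1,2})\cdot\mathfrak q_{F,2} = (1/w_n^2)\cdot\mathfrak q_{F,2}$ to the event ``$k$ is the first common point'', giving the stated asymptotics $\mathfrak q_{F,2}\, n/w_n^2$.

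\textbf{Making it rigorous.} I would proceed by two-sided bounds. For the upper bound, simply
\[
\rho_n \le \esp\abs{R_{n,1}\cap R_{n,2}} = \summ k0n \proba(k\in R_{n,1})\proba(k\in R_{n,2}) = \frac{n+1}{w_n^2},
\]
which already has the right order; to get the constant $\mathfrak q_{F,2}\le 1$ one refines this by noting $\esp\abs{R_{n,1}\cap R_{n,2}} = \rho_n\cdot\esp(\abs{R_{n,1}\cap R_{n,2}}\mid R_{n,1,2}\ne\emptyset)$ and bounding the conditional mean cluster size from below by (a truncation of) $\esp\mathfrak G = \mathfrak q_{F,2}^{-1}$; the renewal structure of $R_n$ gives that the successive common points after a first one dominate a geometric$(\proba(\eta_1<\infty))$ count, uniformly in $n$, up to edge effects of size $o(1)$ relative to $w_n^2/n$ — this requires $w_n^2/n\to\infty$, which holds since $w_n\sim \mathsf C_F m^{1-\beta}/(1-\beta)$ is here evaluated at $n$ (so $w_n^2/n \asymp n^{1-2\beta}\to\infty$ precisely because $\beta<1/2$). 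For the matching lower bound I would restrict attention to first common points $k$ in a central window, say $k\in\{\delta n,\dots,(1-\delta)n\}$ for small $\delta>0$; on this window the two renewal sets have $\ge \delta n$ steps of room on either side, so by the Markov property the conditional probability that $k$ is the first common point and that no further constraint fails is $\ge (1-o(1))\,\mathfrak q_{F,2}$ — using that $\proba(\eta_1 = \infty) = \mathfrak q_{F,2}$ and that a finite renewal horizon only \emph{increases} the chance of termination. Letting $n\to\infty$ then $\delta\downarrow0$ closes the gap.

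\textbf{Main obstacle.} The delicate point is the uniform (in $n$) control of edge effects: $R_n$ is not literally a stationary two-sided renewal set but a stationary \emph{restriction} to $\{0,\dots,n\}$ of one, so the ``probability of a next common point'' is not exactly $1-\mathfrak q_{F,2}$ near the boundary, and one must show the discrepancy is negligible after multiplying by the number of candidate positions. This is where the scaling $w_n^2/n\to\infty$ (equivalently $\beta<1/2$) is essential: it guarantees that the typical spacing between common points, $O(w_n^2/n) \cdot$ constant $= O(n^{1-2\beta})$, is $o(n)$, so all but a vanishing fraction of first common points sit comfortably in the interior and the renewal computation \eqref{eq:qFp} applies verbatim. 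I would isolate this as a short lemma: for any fixed $\ell$, $\proba(\abs{R_{n,1}\cap R_{n,2}} \ge \ell \mid R_{n,1,2}\ne\emptyset) \to \proba(\mathfrak G\ge\ell)$, uniformly enough to exchange limits, which together with the first/second moment comparison above yields $\rho_n \sim \mathfrak q_{F,2}\, n/w_n^2$.
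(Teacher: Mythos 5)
Your proposal is correct in essence and rests on the same two ingredients as the paper --- the shift-invariance $\proba(k\in R_{n,1}\cap R_{n,2})=1/w_n^2$ and the geometric cluster structure with termination probability $\mathfrak q_{F,2}$ --- but it packages them differently. The paper decomposes $\rho_n$ by the \emph{last} common renewal: setting $q_{n,1}=\proba\pp{R_{n,1,2}\cap\{1,\dots,n\}\ne\emptyset,\ \max R_{n,1,2}\le n}$, the forward Markov property gives $\proba(\max R_{n,1,2}=i\mid i\in R_{n,1,2})=\mathfrak q_{F,2}$ for every $i$, so that $q_{n,1}=\mathfrak q_{F,2}\,n/w_n^2$ \emph{exactly}; the boundary remainder $q_{n,2}$ (last common point beyond $n$) is then bounded by $w_n^{-2}\sum_{i=1}^n\sum_{j\ge n-i+1}u(j)^2 = o(n/w_n^2)$, using that $\sum_j u(j)^2<\infty$ precisely because $\beta<1/2$. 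Your "plan" step (first-renewal decomposition) is the time-reversed dual of this and would also work. Your "rigorous" step via the ratio $\rho_n=\esp|R_{n,1,2}|/\esp(|R_{n,1,2}|\mid R_{n,1,2}\ne\emptyset)$ is a valid alternative --- and $\esp|R_{n,1,2}|=(n+1)/w_n^2$ is exact --- but note that the bound $\esp(|R_{n,1,2}|\mid\ne\emptyset)\le\mathfrak q_{F,2}^{-1}$ is the \emph{easy} direction: the finite window can only truncate the geometric, so the cluster count is \emph{dominated by} (not "dominates," as you wrote) an honest geometric, and this gives the lower bound $\rho_n\ge\mathfrak q_{F,2}(n+1)/w_n^2$ for free. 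The matching upper bound on $\rho_n$ requires the conditional mean to be $\ge(1-o(1))\mathfrak q_{F,2}^{-1}$, which is exactly the edge-effect estimate you flag as the "main obstacle." The paper's last-renewal decomposition sidesteps proving convergence of the conditional cluster mean altogether: the geometric enters only through the single termination probability $\mathfrak q_{F,2}$, and the entire edge effect is isolated in the separately and cleanly bounded $q_{n,2}$.
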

\begin{proof}
Introduce
\begin{align*}
q_{n,1}   &:= \proba\pp{R_{n,1,2}\cap\{1,\dots, n\}\ne\emptyset,\, \max R_{n,1,2}\le n},\\
q_{n,2} & := \proba\pp{R_{n,1,2}\cap\{1,\dots, n\}\ne\emptyset,\, \max R_{n,1,2}> n}.
\end{align*}
Then, 
$q_{n,1}\le \rho_n\le q_{n,1}+q_{n,2}$. 
For $q_{n,1}$ we apply the last-renewal decomposition and Markov property:
\begin{align*}
q_{n,1}  
 & = \summ i1{n} \proba\pp{\max R_{n,1,2} = i\mmid i\in R_{n,1,2}}\proba\pp{i\in R_{n,1,2}} =  \summ i1{n} \mathfrak q_{F,2} \frac1{w_n^2} = \frac{\mathfrak q_{F,2}n}{w_n^2},
\end{align*}
where we have used the fact that conditioning on $i\in R_{n,1,2}$, the count $\sum_{k=i}^\infty 1_{\{k\in R_{n,1,2}\}}$ follows a geometric distribution with mean $\mathfrak q_{F,2}^{-1}$.
For $q_{n,2}$, write first by a similar decomposition based on the last renewal before time $n$, 
\begin{align*}
q_{n,2} & = \summ i1{n}\proba\pp{\max(R_{n,1,2}\cap\{1,\dots,n\}) = i, \max R_{n,1,2}>n} \\
& \le \summ i1{n} \proba\pp{i\in R_{n,1,2}}\proba\pp{\max R_{n,1,2}>n\mmid i\in R_{n,1,2}} \le \summ i1{n} \frac1{w_n^2}\sum_{j=n-i+1}^\infty u(j)^2.
\end{align*}
The last step above
 follows from the renewal property and 
 the union bound.
With $v(i) := \sum_{j=i}^\infty u(j)^2\downarrow 0$ as $i\to\infty$ due to the fact $u(j)\le C j^{\beta-1} $ with $\beta\in (0,1/2)$, the last displayed expression  becomes 
$w_n^{-2}\summ i1{n} v(i) = nw_n^{-2}\summ i1{n}(v(i)/n) = nw_n^{-2}o(1) = o(q_{n,1})$, completing the proof.
\end{proof}

Introduce the following two counting numbers:
\begin{equation}\label{eq:N_n}
N_n :=\sum_{1\le i_1<i_2\le m_n}\inddd{R_{n,i_1,i_2}\cap \{1,\ldots,n\}\neq \emptyset},
\end{equation}
 and
\begin{equation}\label{eq:M_n}
M_n:=\sum_{\substack{1\le i_1<i_2\le m_n, 1\le j_1<j_2\le m_n,\\ \{i_1,i_2\}\neq \{j_1,j_2\}, \{i_1,i_2\}\cap\{j_1,j_2\}\neq \emptyset }} \inddd{ R_{n,i_1,i_2}\cap \{1,\ldots,n\}\neq \emptyset,\, R_{n,j_1,j_2}\cap \{1,\ldots,n\} \neq \emptyset}.
\end{equation}
\begin{Lem}\label{Lem:N M estimate}
We have   as $n\rightarrow\infty$ that
\begin{equation}\label{eq:lambda}
\esp N_n \sim   \lambda_n:= \frac{
\mathfrak q_{F,2}
}{2}\frac{n m_n^2}{w_n^2}   \rightarrow\infty , \quad  \frac{N_n}{\lambda_n}\ConvP   1  
\end{equation}
and
$\esp M_n\le C   \frac{n^2 m_n^3}{w_n^4} =o(\lambda_n)$.
\end{Lem}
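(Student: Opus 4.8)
The plan is to prove the three assertions in the order: first moment, then the bound on $\esp M_n$, then the concentration $N_n/\lambda_n\ConvP 1$ (the last needs the second). Throughout I write, for a $2$-subset $P=\{i_1,i_2\}$ of $\{1,\dots,m_n\}$, $A_P:=\{R_{n,i_1}\cap R_{n,i_2}\cap\{1,\dots,n\}\neq\emptyset\}$, so that $N_n=\sum_P\ind_{A_P}$.

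For the first moment: by the i.i.d.\ property of $\{R_{n,i}\}_{i\in\N}$, $\proba(A_P)$ equals a common value $p_n=\proba(A_{\{1,2\}})$; splitting according to whether $\max R_{n,1,2}\le n$ or $>n$ shows $p_n=q_{n,1}+q_{n,2}$ with $q_{n,1},q_{n,2}$ exactly as in the proof of the preceding lemma, so $p_n\sim q_{n,1}=\mathfrak q_{F,2}n/w_n^2$, and multiplying by $\binom{m_n}{2}$ gives $\esp N_n\sim\lambda_n$. To see $\lambda_n\to\infty$, the left-hand inequality in \eqref{eq:m_n rate} yields $\lambda_n\gg w_n^2/(n\log^{4/(2-\alpha)}n)$, and since $w_n\sim\frac{\mathsf C_F}{1-\beta}n^{1-\beta}$ with $\beta<1/2$ the quantity $w_n^2/n\sim cn^{1-2\beta}$ grows like a positive power of $n$, which beats any power of $\log n$.

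The estimate on $\esp M_n$ is where the work is. The two $2$-subsets in \eqref{eq:M_n} are distinct and intersecting, hence share exactly one index, so I may write them as $\{a,b\}$ and $\{a,c\}$ with $a,b,c$ pairwise distinct, leaving at most $m_n^3$ ordered configurations. Fixing one, I would bound $\proba(A_{\{a,b\}}\cap A_{\{a,c\}})$ by a union bound over witnesses $k\in R_{n,a}\cap R_{n,b}\cap\{1,\dots,n\}$ and $l\in R_{n,a}\cap R_{n,c}\cap\{1,\dots,n\}$; using the independence of $R_{n,a},R_{n,b},R_{n,c}$ and the renewal property in item (ii) before \eqref{eq:p>=1} (so that $\proba(k,l\in R_{n,a})=u(|k-l|)/w_n$ and $\proba(k\in R_{n,b})=\proba(l\in R_{n,c})=1/w_n$), this gives
\[
\proba(A_{\{a,b\}}\cap A_{\{a,c\}})\ \le\ \frac1{w_n^3}\sum_{k,l=1}^n u(|k-l|)\ \le\ \frac{2n\,U(n)}{w_n^3},\qquad U(n):=\sum_{j=0}^n u(j).
\]
Since $U(n)=O(n^\beta)$ by \eqref{eq:u} and Karamata's theorem, and $w_n=O(n^{1-\beta})$, one has $U(n)\,w_n=O(n)$, so each summand is $O(n^2/w_n^4)$ and $\esp M_n\le Cn^2m_n^3/w_n^4$; then $\esp M_n/\lambda_n=O(nm_n/w_n^2)\to 0$ by the right-hand inequality in \eqref{eq:m_n rate}. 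This is the main obstacle precisely because the crude bound $\proba(A_{\{a,b\}}\cap A_{\{a,c\}})\le p_n\sim\mathfrak q_{F,2}n/w_n^2$ would only give $\esp M_n=O(nm_n^3/w_n^2)$, which exceeds $\lambda_n$ by the diverging factor $w_n^2/n\sim cn^{1-2\beta}$; one has to exploit the shared index $a$, equivalently to replace the first moment of $|R_{n,a}\cap\{1,\dots,n\}|$ by its second moment $\esp|R_{n,a}\cap\{1,\dots,n\}|^2=O(nU(n)/w_n)=O(n^2/w_n^2)$.

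Finally, for the concentration: since $\esp N_n\sim\lambda_n$, by Chebyshev's inequality it suffices to check $\var(N_n)=o(\lambda_n^2)$. When $P\cap P'=\emptyset$, $A_P$ and $A_{P'}$ are functions of disjoint sub-families of the independent family $\{R_{n,i}\}$, so $\cov(\ind_{A_P},\ind_{A_{P'}})=0$; when $P\neq P'$ with $P\cap P'\neq\emptyset$, this covariance is at most $\proba(A_P\cap A_{P'})$, and these terms sum to at most $\esp M_n$; the diagonal terms sum to at most $\esp N_n$. Hence $\var(N_n)\le\esp N_n+\esp M_n=O(\lambda_n)$, which is $o(\lambda_n^2)$ because $\lambda_n\to\infty$, and $N_n/\lambda_n\ConvP 1$ follows.
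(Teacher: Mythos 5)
Your proposal is correct and follows essentially the same route as the paper. The first-moment computation reuses the asymptotics of $\rho_n$ from the preceding lemma exactly as the paper does; your bound $\proba(A_{\{a,b\}}\cap A_{\{a,c\}})\le w_n^{-3}\sum_{k,l}u(|k-l|)=O(n^2/w_n^4)$ is the same calculation the paper performs for its quantity $\rho_n'$ (exploiting the shared index through $\proba(k,l\in R_{n,a})=u(|l-k|)/w_n$); and your variance bound $\var(N_n)\le\esp N_n+\esp M_n=O(\lambda_n)=o(\lambda_n^2)$ is a direct covariance decomposition, whereas the paper arrives at the same conclusion by expanding $\esp N_n^2$ over the three cases $|P\cap P'|=0,1,2$ — the same bookkeeping packaged slightly differently.
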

\begin{proof}
For the first part, we have
\[
\esp N_n \sim \frac{m_n^2 \rho_n}{2}=\frac{
\mathfrak q_{F,2}
}{2}\frac{n m_n^2}{w_n^2},
\]
which tends to $\infty$ due to $m_n\gg w_n^2/(n\log^{2/(2-\alpha)}n)$, the first part of assumption \eqref{eq:m_n rate}.
Next, note that
\begin{align*}
\rho'_n&:=\proba\pp{  R_{n,1,2}\cap \{1,\ldots,n\}\neq \emptyset,\, R_{n,1,3}\cap \{1,\ldots,n\} \neq \emptyset  }\\
& \le  
\summ k1n\summ{k'}1n
 \proba\pp{k\in R_{n,1,2}, k'\in  R_{n,1,3}}
 \le  \frac{2 }{w_n^3}\summ k1n \summ{k'}kn   u(k'-k) \le C \frac{n^2}{w_n^4}.
\end{align*}
Hence
\begin{align*}
\esp M_n \le  C  \rho_n' m_n^3  \le C   \frac{n^2 m_n^3}{w_n^4}  =o(\lambda_n),
\end{align*}
where the last relation   follows from $m_n\ll w_n^2/n$, the second part of \eqref{eq:m_n rate}.

Next,  by a decomposition of  the  double sum over $1\le i_1<i_2\le m_n$ and $1\le j_1<j_2\le m_n$ according to $|\{i_1,i_2\}\cap \{j_1,j_2\}|=0,1,2$, we have as $n\rightarrow\infty$,
\[
\esp N_n^2= {m_n \choose 2} {m_n-2\choose 2} \rho_n^2+ \esp M_n+ \esp N_n=(\esp N_n)^2 (1-o(1)) + O(\lambda_n),
\]
and hence $\var (N_n) =o((\esp N_n)^2)$, 
which concludes the convergence in probability in \eqref{eq:lambda}.
\end{proof}

\begin{proof}[Proof of Proposition \ref{prop:EVT main}]
We shall work with ${X}_{n,k}^*$ in \eqref{eq:X star}. The key underlying structure is that its partial maximum can be approximated by a collection of $\lambda_n$ (see \eqref{eq:lambda}) i.i.d.~random variables, as summarized in \eqref{eq:EVT main} below, and this approximation alone explains why the extra factor $\mathsf D_{2,\beta} = 1-2\beta$ shows up in the extremal index compared to the candidate extremal index. 

We start by setting the random index set 
\[
J_n(k):= \{(i_1,i_2)\in \{1,\ldots,m_n\}^2:\  i_1<i_2, \  k\in R_{n,i_1,i_2} \}, \quad k=1,\ldots,n.
\]
So 
\equh\label{eq:max Xstar rewrite}
 \max_{k=1,\ldots,n}   {X}_{n,k}^* 
=  \pp{\frac{w_n^2}{ m_n^2}}^{1/\alpha}  \max_{k=1,\ldots,n} \left(\sum_{(i_1,i_2)\in J_n(k)} V_{i_1,i_2}\right)   ,
\eque
where the sum  over $(i_1,i_2)\in J_n(k)$    is understood as $0$ if $J_n(k)=\emptyset$.  
Define also 
\[I_n(k):=\bigcup_{(i_1,i_2)\in J_n(k)}\{ i_1,i_2 \}.\] 
 In fact $|J_n(k)|={|I_n(k)|\choose 2}$, and hence $|J_n(k)|$ cannot take arbitrary integer values.
  But for simplicity of notation we shall still write a consecutive integer range for $|J_n(k)|$ below.
Let \begin{align*}
\mathcal K_n:=\Big\{&k\in \{1,\ldots,n\}:  |J_n(k)|=1,~\text{and} \\  &\text{  either }  J_n(k')=J_n(k) \text{ or } I_n(k')\cap I_n(k) =\emptyset \, \ \forall k'\in \{1,\ldots,n\}\setminus \{k\}    \Big\},
\end{align*}
and set $\mathcal K_n^c:= \{1,\ldots,n\}\setminus \mathcal K_n$.  Then by independence,
\begin{equation}\label{eq:max single}
\max_{k\in \mathcal K_n} \left(\sum_{(i_1,i_2)\in J_n(k)} V_{i_1,i_2}\right)\eqd   \max_{\ell = 1,\dots, \wt{N}_n} V_{\ell} \qmwith \wt{N}_n:=|\cup_{k\in \mathcal{K}_n} J_n(k)|,
\end{equation}
where $\{V_\ell\}_{\ell\in\N}$ are i.i.d.\ of the same distribution as $V_{1,2}$ and independent of everything else. 
Here and below, when a maximum is performed over an empty index set, it is understood as $0$.

Next, observe that that $\wt{N}_n\le N_n$ with $N_n$ in \eqref{eq:N_n}. In addition,   any $(i_1,i_2)$ satisfying $R_{n,i_1,i_2}\neq \emptyset$ and $1\le i_1<i_2\le m_n$,  but not included in $\cup_{k\in \mathcal{K}_n} J_n(k)$,  must have been counted at least once by $M_n$ in \eqref{eq:M_n}.  So  
$0\le N_n-\wt{N}_n\le M_n$.
Recall    $\lambda_n$ in \eqref{eq:lambda}. 

In summary, one can prove
\begin{align}
\limn\proba\pp{\frac{1}{b_n} \max_{k\in \mathcal K_n} {X}_{n,k}^*\le x} & = \limn \proba\pp{ \pp{\frac{w_n^2}{b_n^{\alpha} m_n^2}}^{1/\alpha}\max_{\ell=1,\dots, \wt{N}_n} V_{\ell}\le x}\nonumber\\
&= \limn\proba\pp{\pp{\frac{w_n^2}{b_n^{\alpha} m_n^2}}^{1/\alpha}   \max_{ \ell=1,\dots, \floor{\lambda_n}} V_{\ell}\le x}\label{eq:EVT main} \\
& = \proba\pp{\theta^{1/\alpha} Z_\alpha \le x}, \mfa x>0. \label{eq:EVT std}
\end{align}
Indeed, the first relation above follows from \eqref{eq:max Xstar rewrite} and \eqref{eq:max single}. We postpone the proof for \eqref{eq:EVT main} for a moment. 
 Then \eqref{eq:EVT std} follows from classical extreme-value limit theorem for $\lfloor \lambda_n \rfloor$ i.i.d.\
 random variables of regularly-varying tail (e.g., \cite[Proposition 1.11]{resnick87extreme}). 
 The normalization, say $d_n$, for 
 \[
 \frac1{d_n} \max_{\ell=1,\dots,\floor{\lambda_n}} V_\ell\weakto Z_\alpha,
 \] is well-known to be determined by $\limn\lambda_n\proba(V_1>d_n) = 1$. 
  It is elementary to verify that
\begin{equation}\label{eq:V marginal tail}
\proba(V_1<-x)=\proba(V_1>x)=\frac{1}{2}\proba(U_1U_2<x^{-\alpha})\sim \frac{\alpha}{2} x^{-\alpha} \log x
\end{equation}
as $x\rightarrow\infty$.
Recall also $\lambda_n \sim (\mathfrak q_{F,2}/2)nm_n^2/w_n^2$ and the   requirement \eqref{eq:m_n rate} which implies 
 $\log(\lambda_n)\sim \log(w_n^2/n)\sim (1-2\beta)\log n$. So we have
 \begin{align*}
 d_n   & \sim \pp{\frac1{2} \lambda_n\log \lambda_n}^{1/\alpha} \sim \pp{\frac{1-2\beta}2\lambda_n\log n}^{1/\alpha}\\
 &\sim \pp{\frac{b_n^\alpha m_n^2}{w_n^2}}^{1/\alpha}\pp{{\mathfrak q_{F,2}(1-2\beta)}}^{1/\alpha} = \pp{\frac{b_n^\alpha m_n^2}{w_n^2}}^{1/\alpha}\theta^{1/\alpha},
 \end{align*}
 yielding \eqref{eq:EVT std}. 
Now we check \eqref{eq:EVT main}. 
Introduce $f(x) := ((1/2)x\log x)^{1/\alpha}$. So $d_n \sim f(\lambda_n)$. Introduce accordingly $\what d_n := f(\wt N_n) =  ((1/2)\wt N_n\log\wt N_n)^{1/\alpha}$.
By Lemma \ref{Lem:N M estimate}, we have $\wt{N}_n/\lambda_n\ConvP 1$ and $\lambda_n\rightarrow\infty$ as $n\rightarrow\infty$, 
 and it follows that $\what d_n/d_n\ConvP 1$ since $f$ is a regularly varying function (e.g.~\citep[Theorem 1.12]{kulik20heavy}). Write $Z_n = f(n)\inv \max_{\ell = 1,\dots,n}V_\ell$ and $d_n \inv \max_{\ell=1,\dots,\wt N_n}V_\ell = (\what d_n/d_n) \cdot Z_{\wt N_n}$. So to obtain \eqref{eq:EVT main} it is remains to argue
$Z_{\wt N_n}$ and $Z_n$  have the same limit distribution as $n\to\infty$. The last step is an exercise.

To complete the proof, it remains to show that  as $n\rightarrow\infty$, $b_n\inv \max_{k\in \mathcal K_n^c}   {X}_{n,k}^*\ConvP 0$. 
We fix $p^*$ large enough so that $1-(p^*+1)\beta<0$ (recall $\beta\in(0,1/2)$). Then 
\begin{align*}
\proba(|J_n(k)|> p^* \text{ for some }k =1,\ldots,n)
& \le  \sum_{k=1}^n  \binom{m_n}{p^*+1}w_n^{-p^*-1} 
\le    C \frac{w_n^{p^*+1}}{n^{p^*}}\le C n^{1-(p^*+1)\beta}\rightarrow 0,
\end{align*}
where we have used \eqref{eq:m_n rate} in the second inequality above.
Hence with probability tending to 1 as $n\rightarrow\infty$, we have  
\begin{align*}
\frac{1}{b_n} \max_{k\in \mathcal K_n^c}   {X}_{n,k}^* =  \pp{\frac{w_n^2}{b_n^{\alpha} m_n^2}}^{1/\alpha}  \max_{j=1,\ldots,p^*} \max_{% k=1,\ldots,n
k\in \mathcal K_n^c
} \sum_{(i_1,i_2)\in J_n(k),\ |J_n(k)|=j} V_{i_1,i_2}.
\end{align*}
So it suffices to show for fixed $j=1,\ldots,p^*$ as $n\rightarrow\infty$ that 
\begin{equation}\label{eq:goal id}
\pp{\frac{w_n^2}{b_n^{\alpha} m_n^2}}^{1/\alpha} \max_{k\in\mathcal K_n^c
} W_j(k) \ConvP 0 \qmwith
 W_j(k)=\sum_{(i_1,i_2)\in J_n(k),\ |J_n(k)|=j} |V_{i_1,i_2}|.
 \end{equation}
 Note that $\{W_j(k)\}_{k\in\mathcal K_n^c}$ are, 
 given $\mathcal K_n^c$, 
 identically distributed but possibly dependent random variables, and the total number of distinct $W_j(k)$'s, say $K_j(n)$, does not exceed $N_n-\wt{N}_n\le M_n$.   On the other hand in view of \eqref{eq:V marginal tail}, 
\begin{equation}\label{eq:tail bound W}
\proba(W_j(1)>x)\le j \proba(|V_{1,2}|>x/j)\le C x^{-\alpha} \log x
\end{equation} 
for all $x>0$ and some constant $C>0$. Suppose $\{\wt{W}_j(\ell)\}_{\ell\in \N}$ are i.i.d.\ copies of $W_j(1)$ and independent of everything else.  Then by the tail bound  \eqref{eq:tail bound W}, the tail estimate \eqref{eq:V marginal tail},  the extreme-value limit theorem for $|V_\ell|$ and the fact that $M_n/\lambda_n\ConvP 0$ (Lemma \ref{Lem:N M estimate}), we have   
\[
 \proba\pp{ \pp{\frac{w_n^2}{b_n^{\alpha} m_n^2}}^{1/\alpha} \max_{\ell=1,\ldots,K_j(n)} \wt{W}_j(\ell)>\epsilon}\le  C \proba\pp{ \pp{\frac{w_n^2}{b_n^{\alpha} m_n^2}}^{1/\alpha} \max_{\ell=1,\ldots,M_n} |V_\ell|>\epsilon}\rightarrow 0
\]
for any $\epsilon>0$ as $n\rightarrow\infty$.
Then \eqref{eq:goal id} follows  from     \cite[Proposition 9.7.3]{samorodnitsky16stochastic}.
\end{proof}

\subsection{Proof of Lemma \ref{Lem:reduction}}
Lemma \ref{Lem:reduction} follows if one establishes that
\begin{align*}
A_n & := \frac{w_n^{2/\alpha}}{b_n}\max_{k=1,\ldots,n}\sum_{1\le i_1 \le m_n<i_2}   \frac{\varepsilon_{i_1}\varepsilon_{i_2}}{\Gamma_{i_1}^{1/\alpha}\Gamma_{i_2}^{1/\alpha}}\inddd{k\in   R_{n,i_1,i_2}} \ConvP 0,
\\
B_n& :=  \frac{w_n^{2/\alpha}}{b_n} \max_{k=1,\ldots,n}\sum_{  m_n<i_1<i_2 }   \frac{\varepsilon_{i_1}\varepsilon_{i_2}}{\Gamma_{i_1}^{1/\alpha}\Gamma_{i_2}^{1/\alpha}}\inddd{k\in   R_{n,i_1,i_2}} \ConvP 0,
\end{align*}
as $n\rightarrow\infty$. 
First, fix a number $m>4/\alpha$. Assume without loss of generality that $m_n>4/\alpha$.  Then write
\[
A_n=A_{n,1}+A_{n,2}:=  \frac{w_n^{2/\alpha}}{b_n} \max_{k=1,\ldots,n}\sum_{m \le i_1\le m_n<i_2}  \cdots +   \frac{w_n^{2/\alpha}}{b_n}\max_{k=1,\ldots,n} \sum_{1 \le i_1<m,  i_2>m_n} \cdots.
\]
Then by bounding the maximum of non-negative numbers with their sum, the orthogonality induced by $\epsilon_{i_1}\epsilon_{i_2}$ and \citep[the inequality (3.2)]{samorodnitsky89asymptotic}, we have
\begin{align*}
\esp A_{n,1}^2 & \le  \frac{ w_n^{4/\alpha}}{b_n^2} \esp\sum_{k=1}^n \pp{\sum_{m \le i_1\le m_n<i_2} \frac{\varepsilon_{i_1}\varepsilon_{i_2}}{\Gamma_{i_1}^{1/\alpha}\Gamma_{i_2}^{1/\alpha}}  1_{\{k\in R_{n,i_1,i_2}\}}}^2 \\
&=\frac{ w_n^{4/\alpha}}{b_n^2} \sum_{m \le i_1\le m_n<i_2}  \esp\pp{\Gamma_{i_1}^{-2/\alpha} \Gamma_{i_2}^{-2/\alpha}}   \pp{\sum_{k=1}^n \proba\pp{k\in R_{n,i_1,i_2}}}
 \\&\le C \frac{n w_n^{4/\alpha-2}}{b_n^2}  \sum_{m \le i_1\le m_n<i_2}   i_1^{-2/\alpha}  i_2^{-2/\alpha}   
\le  C  \frac{n w_n^{4/\alpha-2}}{b_n^2} m_n^{1-2/\alpha} \rightarrow 0,
\end{align*}
where we have used the relation $\frac{w_n^2}{n \log^{2/(2-\alpha)}(n)}  \ll m_n $ in the first part of \eqref{eq:m_n rate} in the last step.

For $A_{n,2}$, since $i_1$   takes finitely many values, it suffices to show for fixed $i_1$  that
\begin{align*}
b_n^{-2} w_n^{4/\alpha} \esp \left| \max_{k=1,\ldots,n} \sum_{i_2>m_n} \Gamma_{i_2}^{-1/\alpha}   \epsilon_{i_2}    1_{\{k\in R_{n,i_1,i_2}\}} \right|^2  \le C  n b_n^{-2} w_n^{4/\alpha-2} m_n^{1-2/\alpha}\rightarrow 0,
\end{align*}
which follows as above.
For $B_n$, similarly we have
\[
\esp B_n^2\le C n b_n^{-2} w_n^{4/\alpha-2}  \sum_{m_n<  i_1<i_2}   i_1^{-2/\alpha}  i_2^{-2/\alpha} \le C  n b_n^{-2} w_n^{4/\alpha-2} m_n^{2-4/\alpha}  \rightarrow 0.
\]

\section{Anti-clustering condition when $\beta_p<0$}\label{sec:anti}
Assume $\beta_p<0$ in this section. Based on the convergence of the tail processes we can actually prove the convergence of the single-block cluster point process, which is based on the verification of the  anti-clustering condition \eqref{eq:AC}. 
Strictly speaking we prove a stronger condition known as the $\calS(r_n,a_n)$ condition as in \citep[P.243]{kulik20heavy}, in Lemma \ref{lem:AC} below. This condition could have other consequences, notably when proving limit theorems for tail empirical processes \citep[Chapter 9]{kulik20heavy}. It remains an interesting question whether one could prove limit theorems for tail empirical processes for our model here: the classical approach relies also on certain mixing-type condition and   does not seem   applicable here. 
\begin{Prop}%\label{prop:cluster}
Assume $\beta_p<0$. For $a_n\to \infty$, $r_n\to\infty$ and $r_n = o(\log a_n)$, with $M_{r_n}:=\max_{k=1,\dots,r_n}|X_k|$, the limit of
\[
\calL\pp{\summ i1{r_n}\delta_{X_i/a_n}\mmid M_{r_n}>a_n}
\]
in the space of $\mathfrak M_p(\wb\R\setminus\{0\})$ is $\mathfrak G\delta_\varepsilon$, a unit mass at a Rademacher random variable $\varepsilon$ multiplied by a geometric random variable $\mathfrak G$ with mean $1/\mathfrak q_{F,p}$  and independent from $\varepsilon$. 
\end{Prop}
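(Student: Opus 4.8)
The plan is to recognise the assertion as an instance of the standard correspondence, for a jointly regularly varying stationary sequence, between the spectral tail process, an anti-clustering condition, and the single-block cluster point process (see, e.g., \citep[Chapters~6--7]{kulik20heavy}): our sequence is jointly regularly varying with spectral tail process $\vv\Theta=\varepsilon\vv\Theta^*$ by Theorem~\ref{thm:tail}, and since $\beta_p<0$ the $\calS(r_n,a_n)$ condition holds for $a_n\to\infty$, $r_n\to\infty$, $r_n=o(\log a_n)$ by Lemma~\ref{lem:AC}, so the asserted convergence follows, with the limit being the cluster point process carried by $\vv\Theta$. Since the rest of the paper works with the series representation, I would also give a direct argument, whose backbone is to reduce, on the event $\{M_{r_n}>a_n\}$, the multiple series in \eqref{eq:p>=1} (with the window $\{0,\dots,m\}$ there replaced by $\{1,\dots,r_n\}$, and $w_m$ by $w_{r_n}$) to a single dominant term.

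The steps, in order, would be as follows. First, as a standard consequence of Lemma~\ref{lem:AC} together with \eqref{eq:top_dominates}, and using that $r_n=o(\log a_n)$ forces $r_n\,\P(|X_1|>a_n)\to0$, one records $\P(M_{r_n}>a_n)\sim\mathfrak q_{F,p}\,r_n\,\P(|X_1|>a_n)$, where $\mathfrak q_{F,p}$ of \eqref{eq:qFp} enters as the mean-cluster-size deflation $(\esp\mathfrak G)^{-1}$. Second, one extends the one-coordinate estimates of Lemma~\ref{lem:1} to the whole block: conditionally on $\{M_{r_n}>a_n\}$, with probability tending to $1$ the maximum is produced by a single term $T^*=w_{r_n}^{p/\alpha}[\varepsilon_{\vv i^*}]/[\Gamma_{\vv i^*}]^{1/\alpha}$ with $\abs{T^*}>a_n$, and $\max_{1\le k\le r_n}\abs{X_k-T^*\indd{k\in\vv\eta^*}}=o_{\P}(a_n)$, where $\vv\eta^*:=\bigcap_{r=1}^p R_{r_n,i^*_r}$; here the extra logarithmic factor in \eqref{eq:remainder_rate} relative to \eqref{eq:top_dominates} is what renders the series remainder negligible after summation over the $r_n$ coordinates, while Lemma~\ref{lem:AC} excludes a second competing cluster inside the block. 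Third, on $\{M_{r_n}>a_n\}$ one observes that the sign $[\varepsilon_{\vv i^*}]\eqd\varepsilon$ is a Rademacher variable independent of everything else, that $T^*$ assigns the \emph{same} value to every $k\in\vv\eta^*$, and that $\abs{\vv\eta^*\cap\{1,\dots,r_n\}}$ converges in distribution to $\mathfrak G$: anchored at its leftmost point — which, with probability tending to $1$, lies in $\{1,\dots,r_n\}$ and at a distance from $r_n$ tending to infinity (here $r_n\to\infty$ is used) — $\vv\eta^*$ locally looks like the intersected renewal process $\vv\eta$, which since $\beta_p<0$ is a.s.\ finite with $\abs{\vv\eta}=\mathfrak G$ geometric of mean $\mathfrak q_{F,p}^{-1}$ by \eqref{eq:G} and independent of $\varepsilon$. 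Putting the three steps together, the conditional law of $\sum_{i=1}^{r_n}\delta_{X_i/a_n}$ given $\{M_{r_n}>a_n\}$ converges in $\mathfrak M_p(\wb\R\setminus\{0\})$ to $\mathfrak G\delta_\varepsilon$, the $\mathfrak G$-fold superposition of an atom of sign $\varepsilon$.

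I expect the second step to be the main obstacle: it is where the one-coordinate bounds of Lemma~\ref{lem:1} must be upgraded to a statement uniform over the growing block $\{1,\dots,r_n\}$ — simultaneously controlling the multiple-series remainder in sup-norm over $r_n$ coordinates and excluding a second large cluster — and this is precisely what forces the hypothesis $r_n=o(\log a_n)$, both through the extra logarithm of \eqref{eq:remainder_rate} and through the range of validity of Lemma~\ref{lem:AC}. A smaller but genuine point is the boundary bookkeeping in the third step: one must ensure that the dominant cluster is not truncated by the endpoints $1$ and $r_n$, which follows from $r_n\to\infty$ together with the tightness of the geometric cluster size $\mathfrak G$.
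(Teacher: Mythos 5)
Your first paragraph is precisely the paper's proof: Theorem~\ref{thm:tail} supplies the spectral tail process, Lemma~\ref{lem:AC} supplies the $\calS(r_n,a_n)$ anti-clustering condition, and the convergence of the single-block cluster point process, together with the identification of the limit as $\mathfrak G\delta_\varepsilon$, is then the standard consequence, which the paper draws from \citet[Theorem~4.3, Remark~4.6, Condition~4.1]{basrak09regularly} rather than from \citep{kulik20heavy} as you do. The supplementary direct argument via the series representation is welcome intuition but unnecessary and, as you yourself flag, its second step (uniform-over-block control of the remainder and exclusion of a competing cluster) is left incomplete; since your primary argument is complete and matches the paper, the proposal is correct.
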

\begin{proof}
By Theorem \ref{thm:tail}, \cite[Theorem 4.3]{basrak09regularly} and a continuous mapping argument (cf.\ \citep[Remark 4.6]{basrak09regularly}),  the limit is a point measure
\[
\sif i0 \delta_{\varepsilon\Theta_i^*}
\]
restricted to $\mathfrak M_p(\wb \R\setminus\{0\})$
given that the anti-clustering condition \eqref{eq:AC}  holds \citep[e.g.~Condition 4.1]{basrak09regularly}, which is verified in Lemma \ref{lem:AC} below.  
Note that because of the restriction, the point process above is actually 
$\spp{\sif i0 \inddd{\Theta_i^* = 1}}\delta_{\varepsilon }$
(note that $\vv \Theta_0^*= 1$ always),
and the summation in the parenthesis is readily checked to be a geometric random variable with the desired law, by examining the definition of $\vv\Theta^*$ in \eqref{eq:Theta*} and the renewal property. 
\end{proof}
The following lemma implies the anti-clustering condition \eqref{eq:AC} by a simple argument based on union bound and the stationarity of the sequence.
\begin{Lem}[Condition $\calS(r_n,a_n)$]\label{lem:AC}
Assume $\beta_p<0$.
For $a_n\to\infty$, $r_n\to\infty$ and $r_n = o(\log a_n)$, 
\equh\label{eq:union}
\proba\pp{\max_{\ell\le |k|\le r_n}|X_k|>a_n\eta\mmid |X_0|>a_n\eta} \le \frac2{\proba(|X_0|>a_n\eta)}\sum_{k=\ell}^{r_n}\proba(|X_0|>a_n\eta, |X_{k}|>a_n\eta)\rightarrow 0.
\eque
\end{Lem}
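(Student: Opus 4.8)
The inequality displayed in \eqref{eq:union} is just the union bound over $\ell\le|k|\le r_n$ combined with the stationarity identity $\proba(|X_0|>a_n\eta,|X_{-k}|>a_n\eta)=\proba(|X_0|>a_n\eta,|X_k|>a_n\eta)$, so the substance of the lemma is the convergence
\[
\lim_{\ell\to\infty}\limsupn\frac1{\proba(|X_0|>a_n\eta)}\sum_{k=\ell}^{r_n}\proba\pp{|X_0|>a_n\eta,\ |X_k|>a_n\eta}=0.
\]
Write $x=a_n\eta$ and, since only $k\in\{1,\dots,r_n\}$ are involved, use the thinned representation \eqref{eq:p>=1}--\eqref{eq:top} with $m=r_n$, together with the splitting $X_j=T_j+(X_j-T_j)$. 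For each $\epsilon>0$, passing from $X_j$ to $T_j$ introduces the events $\{|X_j-T_j|>\epsilon x\}$, of probability $O(x^{-\alpha}\log^{p-2}x)$ by \eqref{eq:remainder_rate} and \eqref{eq:Xk remainder}; summed over the $r_n=o(\log a_n)$ relevant indices this is $o(x^{-\alpha}\log^{p-1}x)=o(\proba(|X_0|>x))$ in view of \eqref{eq:top_dominates}, hence negligible after dividing by $\proba(|X_0|>x)$. It therefore suffices to control $\sum_{k=\ell}^{r_n}\proba(|T_0|>(1-\epsilon)x,\ |T_k|>(1-\epsilon)x)$ and then let $\epsilon\downarrow0$.

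Recall $T_j=w_{r_n}^{p/\alpha}[\varepsilon_{S_j}][\Gamma_{S_j}]^{-1/\alpha}$, where $S_j$ is the $p$-element set of the smallest indices $i$ with $j\in R_{r_n,i}$ and $[\Gamma_{S_j}]$ is the product of the corresponding $\Gamma_i$'s, and I would split according to whether $S_0=S_k$ or not. If $S_0\neq S_k$, then $S_0\cup S_k$ has $q$ elements for some $q\in\{p+1,\dots,2p\}$, and the $j$-th smallest of them is $\ge j$, so $[\Gamma_{S_0}][\Gamma_{S_k}]\ge\Gamma_1\Gamma_2\cdots\Gamma_q$; together with $|T_0|>(1-\epsilon)x$ and $|T_k|>(1-\epsilon)x$ this forces $(\Gamma_1\cdots\Gamma_q)^{-1}>\big((1-\epsilon)x\,w_{r_n}^{-p/\alpha}\big)^{2\alpha}$ for that $q$. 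By \eqref{eq:product_tail}, and using $w_{r_n}=O(r_n^{1-\beta})$ with $r_n=o(\log a_n)$, the probability of this last event is $O\big(x^{-2\alpha}w_{r_n}^{2p}\log^{2p-1}x\big)$ and does not depend on $k$, so the contribution of this case to the sum is at most $r_n\cdot O\big(x^{-2\alpha}w_{r_n}^{2p}\log^{2p-1}x\big)$, which after dividing by $\proba(|X_0|>x)\sim c\,x^{-\alpha}\log^{p-1}x$ becomes $O\big(a_n^{-\alpha}\,r_n^{1+2p(1-\beta)}\,\log^{p}a_n\big)\to0$. If instead $S_0=S_k$, then in particular $k\in R_{r_n,i}$ for every $i\in S_0$; since $\{|T_0|>(1-\epsilon)x\}$ and $S_0$ are measurable with respect to $\{\Gamma_i\}_i$ and $\{i:0\in R_{r_n,i}\}$, while, conditionally on this information, the events $\{k\in R_{r_n,i}\}_{i\in S_0}$ are independent each of probability $u(k)$ by the renewal property (ii) recorded before \eqref{eq:p>=1}, one obtains
\[
\proba\big(S_0=S_k,\ |T_0|>(1-\epsilon)x\big)\ \le\ u(k)^p\,\proba\big(|T_0|>(1-\epsilon)x\big).
\]
Summing this over $k=\ell,\dots,r_n$ and dividing by $\proba(|X_0|>x)$, which by \eqref{eq:top_dominates} is asymptotic to $(1-\epsilon)^{\alpha}\proba(|T_0|>(1-\epsilon)x)$, leaves at most $(1-\epsilon)^{-\alpha}(1+o(1))\sum_{k\ge\ell}u(k)^p$.

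Letting $n\to\infty$, then $\epsilon\downarrow0$, then $\ell\to\infty$, the three estimates combine to bound the quantity above by $\sum_{k\ge\ell}u(k)^p\to0$; this series is finite with vanishing tail exactly because $\beta_p<0$, since by \eqref{eq:u} one has $u(k)\sim c\,k^{\beta-1}$ for a constant $c>0$, hence $u(k)^p\sim c^p\,k^{\beta_p-1}$ and $\sum_k u(k)^p<\infty\iff\beta_p<0$. The anti-clustering condition \eqref{eq:AC} then follows from \eqref{eq:union} as indicated there. The crux of the argument — and the only place where $\beta_p<0$ is used — is this dichotomy: when the two leading index sets coincide, the joint exceedance probability is controlled by the summable-in-$k$ quantity $u(k)^p$, whereas when they differ the event demands that a product of at least $p+1$ (rather than $p$) Poisson arrival times be atypically small, gaining an extra factor $x^{-\alpha}$ that more than compensates for the $r_n=o(\log a_n)$ terms in the sum. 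The main obstacle is accordingly the measurability/conditioning step in the coinciding-sets case, namely verifying cleanly that, given the randomness that fixes $S_0$ and the size of $T_0$, the conditional probability that $S_0$ is also the leading index set for $k$ is at most $u(k)^p$.
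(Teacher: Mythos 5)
Your treatment of the case $S_0=S_k$ is sound and is essentially the same computation the paper carries out: the conditional-independence/renewal argument correctly yields the bound $u(k)^p\,\proba(|T_0|>(1-\epsilon)x)$, and summability of $u(k)^p$ is exactly where $\beta_p<0$ enters. The gap is in the case $S_0\neq S_k$. The inequality $[\Gamma_{S_0}][\Gamma_{S_k}]\ge\Gamma_1\Gamma_2\cdots\Gamma_q$ with $q=|S_0\cup S_k|$ is false in general, because indices in $S_0\cap S_k$ contribute a \emph{squared} factor on the left. Concretely, with $p=2$, $S_0=\{1,2\}$, $S_k=\{1,3\}$ (so $q=3$), one has $[\Gamma_{S_0}][\Gamma_{S_k}]=\Gamma_1^2\Gamma_2\Gamma_3$, and $\Gamma_1^2\Gamma_2\Gamma_3\ge\Gamma_1\Gamma_2\Gamma_3$ fails precisely when $\Gamma_1<1$, an event of positive probability. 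The correct consequence of $S_0\cup S_k$ having $\ge p+1$ elements is only $\prod_{i\in S_0\cup S_k}\Gamma_i\ge\Gamma_1\cdots\Gamma_q$, and $[\Gamma_{S_0}][\Gamma_{S_k}]=\prod_{i\in S_0\cup S_k}\Gamma_i\cdot\prod_{i\in S_0\cap S_k}\Gamma_i$ carries the extra small factor. So the $x^{-2\alpha}$-type bound you rely on is not justified.

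This is not merely a local repair: the obvious alternative of using Lemma \ref{lem:1} on just one of the two terms (noting that $S_0\neq S_k$ forces at least one of them to differ from $(1,\dots,p)$) produces a bound of order $w_{r_n}^p\mathsf q_{p-1}((a_n\eta)^\alpha w_{r_n}^{-p})\asymp w_{r_n}^p(a_n\eta)^{-\alpha}\log^{p-2}a_n$, and after multiplying by $r_n$ and dividing by $\proba(|X_0|>a_n\eta)\asymp(a_n\eta)^{-\alpha}\log^{p-1}a_n$ one is left with $\asymp r_n w_{r_n}^p/\log a_n$. Since $w_{r_n}\asymp r_n^{1-\beta}$, this is $o\pp{(\log a_n)^{p(1-\beta)}}$ and does \emph{not} go to zero. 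The culprit is the unbounded normalizing constant $w_{r_n}$ in the joint representation \eqref{eq:p>=1} with $m=r_n$. The paper avoids this by constructing, for each fixed lag $K$, a fresh two-point representation of $(X_0,X_K)$ with $\wt R_{K,i}$ conditioned to hit $\{0,K\}$ and a \emph{bounded} normalizer $\wt w_K=2-u(K)\le 2$; then the ``non-coinciding'' contribution is $\asymp r_n\mathsf q_{p-1}/\mathsf q_p\asymp r_n/\log a_n\to 0$ with no growing prefactor. If you want to stick with your dichotomy on $S_0$ versus $S_k$, you will need to switch to the pairwise representation before summing over $k$; otherwise the $w_{r_n}^p$ factor is fatal.
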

\begin{proof}
Unlike in the proof of Theorem \ref{thm:tail}, we only need a series representation for two-dimensional  $(X_0,X_K)$, for each $K\in\N$ fixed. This can be done in the same way as \eqref{eq:p>=1} is derived, with the only modification that the restriction $\vv\tau^*\cap\{0,\dots,m\}\ne\emptyset$ is now replaced by $\vv\tau^*\cap\{0,K\}\ne\emptyset$. More precisely, let
$\{\wt R_{K,i}\}_{i\in\N}$ be i.i.d.~copies of $\wt R_K$, of which the law, denoted by $\wt \mu_K$, is determined by
\[
\frac{d\wt \mu_K}{d\mu^*} = \frac{\inddd{\vv\tau^*\cap\{0,K\}\ne\emptyset}}{\mu^*(\{\vv\tau^*:\vv\tau^*\cap \{0,K\}\ne\emptyset\})} = \frac{\inddd{\vv\tau^*\cap\{0,K\}\ne\emptyset}}{\wt w_K},
\]
and one can compute by inclusion-exclusion formula that $\wt w_K =2-u(K)$.
Then we arrive at
\[
\ccbb{X_k}_{k=0,K}\eqd\ccbb{\wt w_K^{p/\alpha}\sum_{\vvi\in\calD_p}
\frac{[\varepsilon_{  \vvi }]}{[\Gamma_{  \vvi   }]^{1/\alpha}}\inddd{k\in \wt R_{K,i}}}_{k=0,K},
\] 
where as usual $\{\wt R_{K,i}\}_{i\in\N}$ are independent from $\{\varepsilon_i\}_{i\in\N}$ and $\{\Gamma_i\}_{i\in\N}$. 

This time, introduce (similar to \eqref{eq:ell_k})
for $k=0,K$, 
\begin{align*}
\what\ell_{k}(1) & :=\min\ccbb{j\in\N:k\in \wt R_{K,j}},\nonumber\\
 \what\ell_{k}(s) &:=\min\ccbb{j>\what \ell_{K,k}(s-1):k\in  \wt R_{K,j}},s\ge 2.
\end{align*}
We can write
\equh\label{eq:what Tk}
\what X_k:= \wt w_K^{p/\alpha}\sum_{\vvi\in\calD_p}
\frac{[\varepsilon_{\what\ell_{k}(\vvi)}]}{[\Gamma_{\what\ell_{k}(\vvi)}]^{1/\alpha}} \qmwith
\what T_k := \wt w_K^{p/\alpha} \frac{[\varepsilon_{\what\ell_{k}((1,\dots,p))}]}{[\Gamma_{\what\ell_{k}((1,\dots,p))}]^{1/\alpha}}, \ k=0,K.
\eque
Then,  $(\what X_0,\what X_K)\eqd(X_0,X_K)$ and $(\what X_k, \what T_k)\eqd(X_k,T_k)$, $k=0,K$. 
Then  
\begin{align}
\proba(|X_0|>a_n\eta,|X_K|>a_n\eta) & \le \proba\pp{|\what T_{0}|>a_n\eta/2, |\what T_{K}|>a_n\eta/2} + 2\proba\pp{|\what X_{0}-\what T_{0}|>a_n\eta/2}\nonumber\\
& \le \proba\pp{|\what T_{0}|>a_n\eta/2, |\what T_{K}|>a_n\eta/2} + C \mathsf q_{p-1}\pp{\pp{a_n\eta/2}^\alpha},\label{eq:AC1}
\end{align}
where the last step follows from \eqref{eq:remainder_rate}.
Introduce
\[
\what H_{k} := \inddd{\what \ell_{k}((1,\dots,p)) = (1,\dots,p)}, \quad k=0,K.
\]
Therefore we have, using the representation \eqref{eq:what Tk} again,
\begin{align}
\proba & \pp{|\what T_{0}|>\frac{a_n\eta}2, |\what T_{K}|>\frac{a_n\eta}2}\nonumber
\\
& \quad\le \proba\pp{\frac{(2-u(K))^{p/\alpha}}{[\Gamma_{(1,\dots,p)}]^{1/\alpha}}>\frac{a_n\eta}2}\proba\pp{\what H_{0} = \what H_{K} = 1}  + 
\proba\pp{\frac{(2-u(K))^{p/\alpha}}{\pp{\Gamma_1\cdots\Gamma_{p-1}\Gamma_{p+1}}^{1/\alpha}}>\frac{a_n\eta}2}\nonumber\\
& \quad\label{eq:AC2}
\le C\mathsf q_p((a_n\eta)^\alpha/2^{p+\alpha})\proba\pp{\what H_{0} = \what H_{K} = 1} + C\mathsf q_{p-1}((a_n\eta)^\alpha/2^{p+\alpha}). 
\end{align}
It is straightforward to compute
\begin{align}
\proba\pp{\what H_{0} = \what H_{K} = 1} & = \proba\pp{\what H_{0} = 1}\proba\pp{\what H_{K} = 1\mmid \what H_{0} = 1} \le \proba\pp{\what H_{K} = 1\mmid \what H_{0} = 1}
= u(K)^p.\label{eq:AC3}
\end{align}
To sum up, by \eqref{eq:AC1}, \eqref{eq:AC2} and \eqref{eq:AC3}, the right-hand side of \eqref{eq:union} is further bounded from above by
\[
\frac{C\mathsf q_p((a_n\eta)^\alpha/2^{p+\alpha})}{\mathsf q_p((a_n\eta)^\alpha)}\sif k\ell u(k)^{p} + \frac{Cr_n\mathsf q_{p-1}((a_n\eta)^\alpha/2^{p+\alpha})}{\mathsf q_p((a_n\eta)^\alpha)} \le C\sif k\ell u(k)^{-p}+C\frac {r_n}{\log a_n}. 
\]
Recall $u(k)$ in \eqref{eq:u}. Since $\beta_p<0$, $\sif k1 u(k)^{p}<\infty$. We have thus proved the desired result.
\end{proof}

\begin{acks}[Acknowledgments]
The authors are grateful  to  Gennady Samorodnitsky for suggesting to investigate the tail processes for stable-regenerative multiple-stable processes and for several helpful discussions, and to Rafa\l~Kulik for very detailed explanations regarding tail processes and extremal indices, a very careful reading of a primitive version of the paper, and stimulating discussions. The authors would also like to thank Olivier Wintenberger for pointing out the reference \citep{smith88counterexample}.  The authors thank two anonymous referees for careful reading and constructive comments.
\end{acks}

%%%%%%%%%%%%%%%%%%%%%%%%%%%%%%%%%%%%%%%%%%%%%%
%% Funding information, if any,             %%
%% should be provided in the                %%
%% funding section.                         %%
%%%%%%%%%%%%%%%%%%%%%%%%%%%%%%%%%%%%%%%%%%%%%%
\begin{funding}
% The first author was supported by ...
%
The second author was supported in part by 
Army Research Office, USA (W911NF-20-1-0139).
\end{funding}

\bibliographystyle{imsart-nameyear} % Style BST file (imsart-number.bst or imsart-nameyear.bst)
%\bibliography{bibliography}       % Bibliography file (usually '*.bib')
%\bibliography{references,references18}

%\bibliographystyle{apalike}
\bibliography{../../include/references,../../include/references18}
%\bibliography{../references,../references18}
\end{document}